    \newcommand{\svdots}{%
    \vbox{%
    \scriptsize
    \baselineskip 2\p@
    \lineskiplimit \z@
    \hbox {.}\hbox {.}\hbox {.}%
  }%
}
    \newtheorem{thm}{Theorem}[section]
    \newtheorem{cor}[thm]{Corollary}
    \newtheorem{prop}[thm]{Proposition}
    \newtheorem{lem}[thm]{Lemma}
    \theoremstyle{definition}
    \newtheorem{defn}[thm]{Definition}
    \theoremstyle{remark}
    \newtheorem{rem}[thm]{Remark}
    \newcommand{\Z}{\mathds{Z}}
    \newcommand{\cO}{\mathcal{O}}
    \newcommand{\modd}{\mathrm{mod{-}}}
    \newcommand{\coh}{\mathrm{coh}}
    \newcommand{\Proj}{\mathrm{Proj}}
    \newcommand{\Hom}{\mathrm{Hom}}
    \let\c@equation\c@thm
    \numberwithin{equation}{section}
\begin{document}
    
\title{Blow ups of $\mathds{P}^n$ as quiver moduli for exceptional collections}

\author{Xuqiang QIN}
\thanks{Indiana University}
\address{Department of Mathematics, Indiana University, 831 E. Third St., Bloomington, IN 47405, USA}
\email{qinx@iu.edu}

\begin{abstract}
Suppose $P^n_m$ is the blow up of $\mathds{P}^n$ at a linear subspace of dimension $m$, $\mathcal{L}=\{L_1,\ldots,L_r\}$ is a (not necessarily full) strong exceptional collection of line bundles on $P^n_m$. Let $Q$ be the quiver associated to this collection. One might wonder when is $P^n_m$ the moduli space of representations of $Q$ with dimension vector $(1,\ldots,1)$ for a suitably chosen stability condition $\theta$: $S\cong M_\theta$. In this paper, we achieve such isomorphism using $\mathcal{L}$ of length $3$.
As a result, $P^n_m$ is the moduli space of representations of a very simple quiver. Moreover, we realize the blow up as morphism of moduli spaces for the same quiver.
\end{abstract}
    
    \maketitle
    
    \section{Introduction}
    Let $X$ be a smooth projective variety over an algebraically closed field $\mathbf{k}$ of characteristic $0$. Recall that objects $\mathcal{E}_1,\ldots,\mathcal{E}_n$ in the bounded derived category of coherent sheaves $\mathcal{D}^b(\coh(X))$ forms a exceptional collection if 
    \begin{enumerate}
          \item $\Hom(\mathcal{E}_i,\mathcal{E}_i[m])=\mathbf{k}$ if $m=0$ and is $0$ otherwise;
          \item $\Hom(\mathcal{E}_i,\mathcal{E}_j[m])=0$ for all $m\in\Z$ if $j<i$
    \end{enumerate}
    
    An exceptional collection is strong if in addition:
    $\Hom(\mathcal{E}_i,\mathcal{E}_j[m])=0$ for all $i,j$ if $m\neq 0$. It is full if the smallest triangulated subcategory of $\mathcal{D}^b(\coh(X))$ containing $\mathcal{E}_1,\ldots,\mathcal{E}_n$ is itself.\
    
   In this paper we are only concerned with the case when the objects $\mathcal{E}_i$ are line bundles and the exceptional collection is strong.
   In this situation, we can consider the finite dimensional associative algebra
   \begin{equation*}
          \mathcal{A}=\mathrm{End}(\oplus_{i=1}^n \mathcal{E}_i)
    \end{equation*}
    It is well known that if the collection is full, there is an exact equivalence of derived categories
    \begin{align*}
        \mathbf{R}\mathrm{Hom}(\oplus_{i=1}^n \mathcal{E}_i,-):\mathcal{D}^b(\coh(X))\to D^b(\modd \mathcal{A})
    \end{align*}
    whose inverse is given by 
    \begin{align*}
        (-)\otimes^L(\oplus_{i=1}^n \mathcal{E}_i):D^b(\modd \mathcal{A})\to {D}^b(\coh(X))
    \end{align*}
    This gives a non-commutative interpretation of the derived category of $X$. We note that when we input the structure sheaf $\cO_x$ of a close point $x\in X$ into the first functor, we obtain:
    \begin{align*}
        \mathbf{R}\mathrm{Hom}(\oplus_{i=1}^n \mathcal{E}_i,\cO_x)&=\mathrm{Hom}(\oplus_{i=1}^n \mathcal{E}_i,\cO_x)\\
        &=\oplus_{i=1}^n (\mathcal{E}_i^{\vee})_x
    \end{align*}
    Note since $\mathcal{A}^{op}=\mathrm{End}\big(\oplus_{i=1}^n (\mathcal{E}_i^{\vee})\big)$ is a finite dimensional algebra, there exist a bound quiver $(Q,I)$ such that giving an $\mathcal{A}^{op}$-module is equivalent to giving a representation of $(Q,I)$.\\ 
    
    King\cite{King} proved that when restricted by a stability condition $\theta$, the moduli space $M_{\theta}(\alpha)$ of semistable representations of a bound quiver $(Q,I)$ with any dimension vector $\alpha$ is a projective scheme. Moreover, if $I=0$, i.e. the quiver has no relations, the moduli space $M^S_{\theta}(\alpha)$ of stable representation is a smooth projective variety. In our situation, for each point $x\in X$, one can associate the representation $\oplus_{i=1}^n (\mathcal{E}_i^{\vee})_x$ of $\mathcal{A}^{op}$, which has dimension vector $(1,\ldots,1)$. This provides a tautological map \begin{align*}
        T_0:X\to \mathcal{R}ep_{(1,\ldots,1)}(Q)
    \end{align*}  
    to the moduli stack of representation of $(Q,I)$ with dimension vector $(1,\ldots,1)$. Following \cite{BP}, we note $T_0$
    induce a tautological rational map $T:X\dashrightarrow M_\theta$  where $M_\theta$ is the moduli space of semistable representation of $(Q,I)$ with dimension vector $(1,\ldots,1)$. We note $M_\theta$ is a projective scheme and 
    \begin{align*}
        T(x)&=\mathrm{Hom}(\oplus_{i=1}^n \mathcal{E}_i,\cO_x)\\
        &=\oplus_{i=1}^n (\mathcal{E}_i^{\vee})_x
    \end{align*}
    if $x\in X$ is in the domain of $T$. It is also important to notice that $T$ can be similarly defined even if the collection of line bundles is not full.\\
    
    It is natural to wonder when $T$ is a morphism and the relation between $M_{\theta}$ and $X$ for various $\theta$. \cite{QZ} provided some answers for this problem for rational surfaces. On the other hand, one can also consider the $(n+1)$-Kronecker quiver, which can be thought of as the quiver associated to the (not full) strong exceptional collection $\{\cO_{\mathds{P}^n},\cO_{\mathds{P}^n}(1)\}$ on $\mathds{P}^n$. It is well know that for appropriate stability condition, $T$ is an isomorphism. In this paper, we prove a similar result on $P^n_m$:
    \begin{thm}\label{main}
        Let $P^n_m$ be the blow up of $\mathds{P}^n$ with center a linear subspace of dimension $m$. Let $E$ be the exceptional divisor and $H$ the pull back of hyperplane section. Then  $\{\mathcal{O}_{P^n_m},\mathcal{O}_{P^n_m}(H-E),\mathcal{O}_{P^n_m}(H)\}$ is a strong exceptional collection of line bundles that is not full. Let $\mathcal{A}=\mathrm{End}\Big(\mathcal{O}_{P^n_m}\oplus\mathcal{O}_{P^n_m}(H-E)\oplus\mathcal{O}_{P^n_m}(H)\Big)$ be the endomorphism algebra. Let $Q$ be the quiver for $\mathcal{A}^{op}$. Then one can choose (many) stability conditions so that the moduli space of semistable representations of $Q$ with dimension vector $(1,1,1)$ is a fine moduli space and the tautological rational map
        \begin{align*}
            T:P^n_m\dashrightarrow M_\theta
        \end{align*}
        is an isomorphism.
    \end{thm}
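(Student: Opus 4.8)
Here is how I would approach the proof.

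The plan is to split the argument into three parts: (a) prove the assertions about the collection and compute the quiver $Q$ explicitly, showing in particular that it has no relations; (b) single out a chamber of stability conditions for which every tautological representation $R_x=\bigoplus_i(\mathcal{E}_i^{\vee})_x$ is stable, so that $T$ becomes an everywhere-defined morphism onto a smooth projective fine moduli space; (c) deduce that $T$ is an isomorphism. It is convenient to fix coordinates $x_0,\dots,x_n$ on $\mathds{P}^n$ with blow-up center $Z=\{x_0=\dots=x_{n-m-1}=0\}$ and complementary linear space $W=\{x_{n-m}=\dots=x_n=0\}$, so that $Z\cap W=\varnothing$, and to keep in mind the blow-down $\pi\colon P^n_m\to\mathds{P}^n$ together with the projection $p\colon P^n_m\to\mathds{P}^{n-m-1}$ away from $Z$, which is the morphism defined by $|H-E|$; recall that $(\pi,p)$ embeds $P^n_m$ as the incidence variety $\{x_iy_j=x_jy_i:0\le i,j\le n-m-1\}\subseteq\mathds{P}^n\times\mathds{P}^{n-m-1}$ and realizes $P^n_m$ as a $\mathds{P}^{m+1}$-bundle over $\mathds{P}^{n-m-1}$.

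For (a), I would compute the relevant $\Ext$-groups by pushing forward along $\pi$: one has $\pi_*\mathcal{O}(H)=\mathcal{O}_{\mathds{P}^n}(1)$, $\pi_*\mathcal{O}(H-E)=\mathcal{I}_Z(1)$, $\pi_*\mathcal{O}(E)=\mathcal{O}_{\mathds{P}^n}$, $\pi_*\mathcal{O}(-E)=\mathcal{I}_Z$, with all higher direct images vanishing, so that every required vanishing reduces to the (obvious) vanishing of $H^{>0}$ of $\mathcal{O}$, $\mathcal{O}(1)$, $\mathcal{I}_Z$ and $\mathcal{I}_Z(1)$ on $\mathds{P}^n$. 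Non-fullness follows from $\mathrm{rk}\,K_0(P^n_m)=n+1+(m+1)(n-m-1)>3$ in the relevant range $0\le m\le n-2$. The generating morphisms are then the $n-m$ linear forms spanning $\Hom(\mathcal{O},\mathcal{O}(H-E))=H^0(\mathcal{I}_Z(1))$, the unique section spanning $\Hom(\mathcal{O}(H-E),\mathcal{O}(H))$, and $m+1$ maps completing a basis of $\Hom(\mathcal{O},\mathcal{O}(H))=H^0(\mathds{P}^n,\mathcal{O}(1))$ modulo the image of composition; so $Q$ has three vertices with $n-m$ arrows $2\to1$, one arrow $3\to2$ and $m+1$ arrows $3\to1$, and a dimension count ($\dim\mathcal{A}=2n-m+5$ equals the dimension of the path algebra of $Q^{\mathrm{op}}$) shows $Q$ carries no relations. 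By King's theorem $M_\theta$ is then, for $\theta$ off the three walls $\theta_i=0$, a smooth projective fine moduli space of dimension $(n+2)-2=n$.

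Part (b) is where I expect the real work to lie. For a dimension vector $(1,1,1)$ there are only six nontrivial sub-dimension-vectors to test, and a direct inspection shows that in the chamber $\theta_1<0<\theta_2,\theta_3$ (for instance $\theta=(-2,1,1)$) a representation is stable exactly when the arrows $2\to1$ are not all zero and the arrow $3\to2$ or one of the arrows $3\to1$ is nonzero. For $R_x$ the first condition becomes base-point-freeness of $|H-E|$, which I would check by noting that over $z\in Z$ the strict transform of $\{x_i=0\}$ meets the fibre $\mathds{P}(N_{Z/\mathds{P}^n,z})$ of $E$ in $\{dx_i=0\}$, while the $dx_i$ span $N^{\vee}_{Z/\mathds{P}^n,z}$; the second becomes the fact that the canonical section of $\mathcal{O}(E)$ vanishes on $E$ (which lies over $Z$) while the sections $x_{n-m},\dots,x_n$ of $\mathcal{O}(H)$ vanish simultaneously only along $\pi^{-1}(W)$ (which lies over $W$), and $Z\cap W=\varnothing$. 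Hence all $R_x$ are $\theta$-stable; the locally free family $(\mathcal{E}_1^{\vee},\mathcal{E}_2^{\vee},\mathcal{E}_3^{\vee})$ with its evaluation maps is a flat family of stable representations over $P^n_m$, so $T$ extends to a morphism $P^n_m\to M_\theta$.

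For (c), I would first check $T$ is injective on closed points: from $R_x$ one recovers all the coordinates $x_0,\dots,x_n$ evaluated at $\pi(x)$ — those of $Z$ from the compositions of the arrow $3\to2$ with the arrows $2\to1$, the remaining ones from the arrows $3\to1$ — hence $\pi(x)$, and one recovers $p(x)$ from the arrows $2\to1$; since $(\pi,p)$ is a closed embedding, $R_x\cong R_{x'}$ forces $x=x'$. A proper injective morphism from the irreducible $n$-fold $P^n_m$ onto the smooth connected $n$-fold $M_\theta$ is surjective and, in characteristic $0$, birational, hence an isomorphism by Zariski's main theorem. The main obstacle, as indicated, is part (b): isolating the correct chamber (a neighbouring chamber should instead yield $\mathds{P}^n$ itself, which is presumably how the blow-down is recovered as a variation of GIT quotient) and verifying cleanly that the two relevant linear systems are base-point-free on the blow-up; once $T$ is known to be a genuine morphism, part (c) is essentially formal.
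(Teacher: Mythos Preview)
Your argument is correct and your chamber, once conventions are unwound (you reverse the paper's vertex labelling and use the opposite sign convention for $\theta$-stability), coincides with the paper's $\theta=(-p,p-q,q)$, $0<p<q$; in particular your characterisation of the stable locus matches Lemmas~\ref{stable1} and~\ref{stable2}. The route, however, is genuinely different. The paper never argues directly that $T$ is injective; instead it builds a comparison morphism $F\colon\mathrm{Rep}(Q)\to\mathrm{Rep}(Q_0)$ to the $(n{+}1)$-Kronecker quiver (sending $x_i\mapsto x_i$ for $i\le m$ and $x_i\mapsto ex_i$ for $i>m$), shows it descends to a projective morphism $f\colon M_\theta\to M_{\theta_0}\cong\mathds{P}^n$ fitting into a square with $\pi$ and $T_0$, identifies the locus $C=\{r_e=0\}\subset M_\theta$ with $\mathds{P}^m\times\mathds{P}^{n-m-1}$ and $f$ as an isomorphism off $C$, and then invokes the weak factorization theorem to conclude that $f$ is the blow-up of $\mathds{P}^n$ along $T_0(\mathds{L})$, whence $T$ is an isomorphism by the universal property. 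Your approach is more elementary---you avoid weak factorization altogether by reading off $(\pi(x),p(x))$ from the paths $3\to 1$ and the arrows $2\to 1$, and closing with Zariski's main theorem---while the paper's approach has the payoff that the auxiliary morphism $f$ is precisely the blow-down, which is what sets up Theorem~\ref{second} and exhibits the contraction $P^n_m\to\mathds{P}^n$ as a wall-crossing between quiver moduli for the \emph{same} quiver $Q$ (at $\theta'=(-p,0,p)$), rather than between $Q$ and $Q_0$ as you speculate in your aside.
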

    
    \begin{cor}
        There are many stability conditions $\theta$ such that $P^n_m$ is the moduli space of semistable representations with dimension vector $(1,1,1)$ of the following quiver $Q$
\[
    \begin{tikzcd}[column sep=huge,row sep=huge]
    1
    \arrow[rr,shift left=2ex,"x_0"]
    \arrow[rr,"\svdots","x_m" swap]
    \arrow[dr,"e"]
    && 3
    \\
    &2\arrow[ur,"x_{m+1}" pos=0.45]
     \arrow[ur,shift right=2ex,"\svdots" {sloped,pos=0.4},"x_n" swap]
    \\
    \end{tikzcd}
    \]\end{cor}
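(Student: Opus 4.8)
The plan is to obtain the Corollary directly from Theorem~\ref{main}, whose conclusion is that for many $\theta$ the fine moduli space $M_\theta$ of semistable representations of dimension vector $(1,1,1)$ of the quiver of $\mathcal{A}^{\op}$ is isomorphic to $P^n_m$ via $T$; so the only thing left to verify is that the quiver of $\mathcal{A}^{\op}$ is the one drawn in the statement and that it carries no relations. Since forming endomorphism algebras commutes with dualizing a collection (which also reverses it) and with twisting by a line bundle, one has $\mathcal{A}^{\op}\cong\mathrm{End}\big(\cO\oplus\cO(E)\oplus\cO(H)\big)$ (all sheaves on $P^n_m$), and by Theorem~\ref{main} the collection $\{\cO,\cO(E),\cO(H)\}$ is again a strong exceptional collection of line bundles. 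Hence $\mathcal{A}^{\op}$ is a basic finite-dimensional algebra, determined by its Gabriel quiver together with an admissible relation ideal; its quiver has three vertices $1,2,3$, corresponding to $\mathcal F_1=\cO$, $\mathcal F_2=\cO(E)$, $\mathcal F_3=\cO(H)$, with arrows $i\to j$ a basis of $\Hom(\mathcal F_i,\mathcal F_j)$ modulo compositions through the intermediate vertex.

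Next I would compute the three nonzero $\Hom$-spaces using the blow-down $\pi\colon P^n_m\to\mathds{P}^n$ and the $\mathds{P}^{m+1}$-bundle $p\colon P^n_m\to\mathds{P}^{n-m-1}$ attached to the base-point-free system $|H-E|$. The projection formula gives $\pi_*\cO(H)=\cO_{\mathds{P}^n}(1)$ and $p_*\cO(H-E)=\cO_{\mathds{P}^{n-m-1}}(1)$, so $\Hom(\mathcal F_1,\mathcal F_3)=H^0(\cO(H))$ has dimension $n+1$ and $\Hom(\mathcal F_2,\mathcal F_3)=H^0(\cO(H-E))$ has dimension $n-m$. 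For the middle term, $\cO(E)|_E$ restricts to $\cO(-1)$ on the $\mathds{P}^{n-m-1}$-fibres of $\pi|_E$ (as $-E$ is $\pi$-ample), which have no cohomology once $m\le n-2$; thus the sequence $0\to\cO\to\cO(E)\to\cO(E)|_E\to0$ together with $R\pi_*\cO=\cO_{\mathds{P}^n}$ yields $R\pi_*\cO(E)=\cO_{\mathds{P}^n}$, whence $\Hom(\mathcal F_1,\mathcal F_2)=H^0(\cO(E))=\mathbf k$. So vertex $1$ has a single arrow $e$ to vertex $2$ (the canonical section cutting out $E$), vertex $2$ has $n-m$ arrows $x_{m+1},\dots,x_n$ to vertex $3$ (coordinates pulled back from $\mathds{P}^{n-m-1}$), and the $(n+1)$-dimensional space $\Hom(\mathcal F_1,\mathcal F_3)$ remains. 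Multiplication by the non-zero-divisor $e$ embeds $H^0(\cO(H-E))$ into $H^0(\cO(H))=H^0(\mathds{P}^n,\cO(1))$ as the span of $n-m$ of the coordinates, say $x_{m+1},\dots,x_n$; a complement is spanned by the remaining coordinates $x_0,\dots,x_m$, providing $m+1$ arrows $1\to3$ that do not factor through vertex $2$. This reproduces exactly the displayed quiver $Q$.

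Finally I would check that $Q$ has no relations. It is acyclic and layered $1<2<3$, so its only paths of length $\ge2$ are the $n-m$ composites of $e$ with an $x_j$ $(m<j\le n)$; under $\mathbf kQ\twoheadrightarrow\mathcal A^{\op}$ these map to the products of the corresponding sections, namely to the coordinates $x_{m+1},\dots,x_n$, which together with $x_0,\dots,x_m$ span $H^0(\cO(H))$ with no linear relation, so no parallel paths are identified. Equivalently, a dimension count $\dim_{\mathbf k}\mathbf kQ=3+(n+2)+(n-m)=2n-m+5=3+1+(n-m)+(n+1)=\dim_{\mathbf k}\mathcal A^{\op}$ forces the surjection $\mathbf kQ\to\mathcal A^{\op}$ to be an isomorphism, i.e. the relation ideal is $0$. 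Therefore the quiver of $\mathcal A^{\op}$ is the one in the statement with no relations, and the Corollary is precisely Theorem~\ref{main} transported along this identification; the ``many'' stability conditions are exactly those produced there.

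The only genuine computation is that of the three cohomology groups — in particular $H^0(\cO_{P^n_m}(E))=\mathbf k$, the step that uses $m\le n-2$ (the case $m=n-1$ being degenerate, $P^n_{n-1}=\mathds{P}^n$) — together with the rank and image of multiplication by $e$; once these are in place the shape of the Gabriel quiver and the vanishing of the relation ideal follow formally, and no stability analysis beyond Theorem~\ref{main} is required. This is where I expect whatever small difficulty there is to lie.
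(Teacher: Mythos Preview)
Your proposal is correct and follows essentially the same route as the paper: the Corollary is an immediate consequence of Theorem~\ref{main} once the quiver of $\mathcal{A}^{\op}$ is identified with the displayed $Q$ and shown to have no relations. The paper simply asserts this identification at the start of Section~3 (``Its quiver of sections $Q$ has no relations and is given by\ldots''), relying implicitly on the cohomology vanishing of Lemma~\ref{slo} and the quiver-of-sections formalism of \S2.4; you supply a more explicit computation of the three Hom-spaces via the two projections $\pi$ and $p$, together with a dimension count to rule out relations, which is more detailed than what the paper writes but not a different argument.
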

The proof of the theorem follows the idea in \cite{QZ} by thinking of $\{\mathcal{O}_{P^n_m},\mathcal{O}_{P^n_m}(H-E),\mathcal{O}_{P^n_m}(H)\}$ as an 'augmentation' of $\{\cO_{\mathds{P}^n},\cO_{\mathds{P}^n}(1)\}$ on $\mathds{P}^n$. However, many arguments were simplified because the quiver in the present case is nicer and has no relations. \\

Our second theorem shows that by varying the stability condition, we can also obtain $\mathds{P}^n$ as a moduli space of representations of $Q$, and realize the natural contraction $\pi:P^n_m\to \mathds{P}^n$ as a morphism between quiver moduli.
\begin{thm}\label{second}
    Let $\theta'=(-p,0,p)$ for $p\in\Z_{>0}$, then the moduli space $M_{\theta'}$ of $\theta'$-semistable representation of $Q$ with dimension vector $(1,1,1)$ is isomorphic to $\mathds{P}^n$ via the tautological map. Moreover, the morphism $id$ induced  by the identity map $\iota:\mathbf{k}[x_0,\ldots,x_n,e]\to \mathbf{k}[x_0,\ldots,x_n,e]$ on the coordinate ring of  quiver variety of $Q$ descents to the natural projection $\pi':M_\theta\to M_{\theta'}$
\end{thm}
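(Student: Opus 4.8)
The plan is to run everything on the affine quiver variety $R=\Spec\mathbf{k}[x_0,\dots,x_n,e]\cong\mathds{A}^{n+2}$, on which the gauge group $G=(\mathbf{k}^\times)^3/\mathbf{k}^\times\cong(\mathbf{k}^\times)^2$ rescales the arrows ($x_i$, $i\le m$, by $t_3t_1^{-1}$; $e$ by $t_2t_1^{-1}$; $x_j$, $j>m$, by $t_3t_2^{-1}$), so that $M_\theta$ and $M_{\theta'}$ are the GIT quotients of $R$ for the characters $\chi_\theta,\chi_{\theta'}$. First I would read off the $\theta'$-(semi)stable loci from King's numerical criterion. A representation with dimension vector $(1,1,1)$ has only finitely many possible dimension vectors of subrepresentations, and each underlying subspace is an actual subrepresentation precisely when a prescribed set of arrows vanishes: $(1,0,1)$ iff $e=0$; $(0,1,0)$ iff $x_{m+1}=\dots=x_n=0$; $(1,1,0)$ iff $x_0=\dots=x_n=0$; $(1,0,0)$ iff $e=0$ and $x_0=\dots=x_m=0$; while $(0,0,1)$ and $(0,1,1)$ are always subrepresentations. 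Testing $\theta'=(-p,0,p)$ against these, the $\theta'$-semistable locus is $U_{\theta'}=\{(x_0,\dots,x_m)\neq 0\}\cup\{(x_{m+1},\dots,x_n)\neq 0,\ e\neq 0\}$ and the $\theta'$-stable locus is $\{(x_{m+1},\dots,x_n)\neq 0,\ e\neq 0\}$; in particular $\theta'$ lies on the wall $\theta_2=0$.

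Next I would identify $M_{\theta'}$ with $\mathds{P}^n$ through its homogeneous coordinate ring. The $\chi_{\theta'}^{\,k}$-semi-invariants in $\mathbf{k}[x_0,\dots,x_n,e]$ are spanned by the monomials of total degree $kp$ in the $n+1$ elements $x_0,\dots,x_m,\ x_{m+1}e,\dots,x_ne$, which are algebraically independent; hence $\bigoplus_{k\ge 0}\mathbf{k}[x_0,\dots,x_n,e]^{G,\chi_{\theta'}^{\,k}}$ is the $p$-th Veronese subring of a polynomial ring in $n+1$ variables, whose $\Proj$ is $\mathds{P}^n$. Geometrically: on the stable locus one uses $e\neq 0$ to normalize $e=1$ and recovers $[x_0:\dots:x_n]$, identifying the stable part with $\mathds{P}^n\setminus\mathds{P}^m$ (the complement of the linear center), while a strictly semistable representation is $S$-equivalent to $(\mathbf{k},0,\mathbf{k})_{(x_0,\dots,x_m)}\oplus(0,\mathbf{k},0)$, whose class is the point $[x_0:\dots:x_m]$ of that $\mathds{P}^m\subset\mathds{P}^n$. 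This is exactly the tautological map applied to the evident family of $Q$-representations over $\mathds{P}^n$ (vertices $\mathcal{O}(-1),\mathcal{O}(-1),\mathcal{O}$, with $e=\id$ and $x_i$ multiplication by the $i$-th coordinate), so the tautological map $T\colon\mathds{P}^n\to M_{\theta'}$ is an isomorphism.

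For the second assertion, Step 1 together with the description of the $\theta$-semistable locus $U_\theta$ from the proof of Theorem \ref{main} gives $U_\theta\subseteq U_{\theta'}$. Hence the identity on $R$ restricts to a $G$-equivariant open immersion $U_\theta\hookrightarrow U_{\theta'}$; composing with the quotient $U_{\theta'}\to M_{\theta'}$ produces a $G$-invariant morphism $U_\theta\to M_{\theta'}$, which factors through a morphism $\pi'\colon M_\theta\to M_{\theta'}$ because $M_\theta=U_\theta/G$ is a geometric (hence categorical) quotient by Theorem \ref{main}; this $\pi'$ is the map on quotients induced by $\iota=\id$. To see $\pi'=\pi$ under $M_\theta\cong P^n_m$, $M_{\theta'}\cong\mathds{P}^n$: over $\{e\neq 0\}$ a $\theta$-stable representation is already $\theta'$-stable and both identifications send it to $[x_0:\dots:x_n]$, so $\pi'$ restricts to the identity of $P^n_m\setminus E\cong\mathds{P}^n\setminus\mathds{P}^m$; the locus $\{e=0\}$ inside $U_\theta$ forces $(x_0,\dots,x_m)\neq 0$ and $(x_{m+1},\dots,x_n)\neq 0$, hence maps onto $E\cong\mathds{P}^m\times\mathds{P}^{n-m-1}$, and there the representation becomes strictly $\theta'$-semistable with $S$-class $[x_0:\dots:x_m]$, so $\pi'|_E$ is the first projection. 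These are precisely the restrictions of the blow-down $\pi\colon P^n_m\to\mathds{P}^n$ to $P^n_m\setminus E$ and to the exceptional divisor, so $\pi'=\pi$.

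I expect the main obstacle to be careful bookkeeping rather than conceptual difficulty: pinning down the $\theta'$-semistable locus and, above all, the $S$-equivalence classes of its strictly semistable points (since $\theta'$ is on a wall), and then upgrading the resulting bijection to a scheme-theoretic isomorphism $M_{\theta'}\cong\mathds{P}^n$ --- which is why I would argue through the explicit (Veronese) ring of semi-invariants rather than only on closed points. Granting Theorem \ref{main}, the construction of $\pi'$ and its identification with $\pi$ are then essentially formal.
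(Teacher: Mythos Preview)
Your proposal is correct and follows essentially the same route as the paper: both identify $M_{\theta'}$ with $\mathds{P}^n$ by recognizing the ring of $\theta'$-semi-invariants as the $p$-th Veronese of $\mathbf{k}[x_0,\dots,x_m,x_{m+1}e,\dots,x_ne]$, both verify that $\theta$-stable implies $\theta'$-semistable to get the descent of $\id$, and both identify $\pi'$ with the blow-down by checking separately on the open locus $\{e\neq 0\}$ and on $C=\{e=0\}$. Your discussion of $S$-equivalence classes and of the explicit universal family over $\mathds{P}^n$ is more detailed than the paper's treatment (which simply computes the $\Proj$), but this is elaboration rather than a different strategy.
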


If one uses the collection $\{\mathcal{O}_{P^n_m},\mathcal{O}_{P^n_m}(E),\mathcal{O}_{P^n_m}(H)\}$, which corresponds to the quiver $Q^{op}$, the same results follow from \cite{AC} or \cite{CS}. In fact, much more general results were proved in these two papers for quivers which come from a collection of globally generated line bundles using GIT techniques. We note that they used quivers corresponding to the endomorphism algebra while ours corresponds to the opposite.
In particular the present collection $\{\mathcal{O}_{P^n_m},\mathcal{O}_{P^n_m}(E),\mathcal{O}_{P^n_m}(H)\}$ satisfies the globally generating assumption in \cite{AC} or \cite{CS}, while the collection in Theorem 1.1 does not.

\subsection*{Notations and Conventions}
\begin{itemize}
    \item $P^n_m$ is the blow up of $\mathds{P}^n$ centered at a linear subspace of dimension $m$. We require $n\geq 2$ and $0\leq m\leq n-2$.
          
    \item For a sheaf $\mathcal{F}$ on a scheme $X$, we use $h^i(X,\mathcal{F})$ to denote the dimension of $H^i(X,\mathcal{F})$.
\end{itemize}

\section{Preliminaries}
\subsection{Geometry of $P^n_m$} In this section we provide basic facts on geometry of $P^n_m$. The main references for this section are \cite{Har} \cite{LYY}.\\     
Let $\pi:P^n_m\to \mathds{P}^n$ be the blow up of $\mathds{P}^n$ with center
$\mathds{L}=\{X_{m+1}=\ldots=X_n=0\}$ being a linear subspace of dimension $m$. Then
\begin{align*}
    \mathcal{N}_{\mathds{L}/\mathds{P}^n}=\mathcal{O}_{\mathds{L}}(-1)^{\oplus (n-m)}
\end{align*}
Thus the exceptional divisor $E\cong \mathds{L}\times \mathds{P}^{n-m-1}\cong \mathds{P}^m\times \mathds{P}^{n-m-1}$. The $P^n_m$ is a smooth toric variety whose Picard group is 
\begin{align*}
    Pic(X)\cong \Z[H]\oplus \Z[E]
\end{align*}
where $H$ is the pull back of a hyperplane section of $\mathds{P}^n$. The canonical divisor of $P^n_m$ is given by
\begin{align*}
    K_{P^n_m}=-(n+1)H+(n-m-1)E
\end{align*}
The Chow ring of $P^n_m$ has the following presentation
\begin{align*}
    A(P^n_m)=\Z[H,E]/\big((H-E)^{n-m},H^m-(H-E)H^{m-1}\big)
\end{align*}\\

Recall a divisor $D$ on a smooth projective variety is called strong left orthogonal if
\begin{align*}
    h^i(\cO(D))=0
\end{align*}
for all $i>0$ and 
\begin{align*}
    h^{i}(\cO(-D))=0
\end{align*}
for all $i\geq 0$.
\begin{lem}\label{slo}
    The divisors $H,E,H-E$ on $P^n_m$ are strong left orthogonal.
\end{lem}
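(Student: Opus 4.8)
The statement to prove is that $H$, $E$, and $H-E$ on $P^n_m$ are strong left orthogonal, i.e. for each $D \in \{H, E, H-E\}$ we have $h^i(\cO(D)) = 0$ for $i>0$ and $h^i(\cO(-D)) = 0$ for all $i \geq 0$. The plan is to compute all the relevant cohomology groups directly using the projection $\pi : P^n_m \to \mathds{P}^n$ together with the projective bundle description of $P^n_m$. Two complementary descriptions are available: $P^n_m$ is the blow up, so $\pi_* \cO_{P^n_m} = \cO_{\mathds{P}^n}$ and $R^j\pi_* \cO_{P^n_m} = 0$ for $j>0$; and, dually, $P^n_m$ can be realized as a projective bundle over $\mathds{P}^m$ (the proper transform picture), which gives good control of $E$ and of twists by $E$.

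\smallskip

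First I would handle $D = H$. Since $\cO(H) = \pi^*\cO_{\mathds{P}^n}(1)$ and $R^j\pi_*\cO_{P^n_m} = 0$ for $j > 0$, the projection formula gives $R^j\pi_*\cO(H) = \cO_{\mathds{P}^n}(1)$ for $j=0$ and $0$ otherwise, hence by the Leray spectral sequence $H^i(P^n_m, \cO(H)) = H^i(\mathds{P}^n, \cO(1))$, which vanishes for $i>0$; similarly $H^i(P^n_m,\cO(-H)) = H^i(\mathds{P}^n,\cO(-1)) = 0$ for all $i$. Next, for $D = E$: I would use the exact sequence $0 \to \cO(-E) \to \cO_{P^n_m} \to \cO_E \to 0$ together with $E \cong \mathds{P}^m \times \mathds{P}^{n-m-1}$. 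Since $H^0(\cO_{P^n_m}) = \mathbf{k} = H^0(\cO_E)$ and $H^i(\cO_{P^n_m}) = 0$ for $i>0$ (blow up of $\mathds{P}^n$) and $H^i(\cO_E) = 0$ for $i>0$ (product of projective spaces), the long exact sequence forces $H^i(\cO(-E)) = 0$ for all $i \geq 0$. For $H^i(\cO(E))$ with $i>0$: twisting $0 \to \cO_{P^n_m} \to \cO(E) \to \cO_E(E) \to 0$ and using $\cO_E(E) = \cO_E(-1)$ on the $\mathds{P}^{n-m-1}$-factor (the tautological bundle of the exceptional $\mathds{P}^{n-m-1}$-bundle), whose cohomology vanishes in all degrees since $n - m - 1 \geq 1$, I get $H^i(\cO(E)) = H^i(\cO_{P^n_m}) = 0$ for $i > 0$.

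\smallskip

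Finally, for $D = H - E$: this is the most substantive case. The cleanest route is to recall that $\cO(H-E)$ is globally generated and in fact defines the other projective bundle structure $P^n_m \to \mathds{P}^m$; concretely, $\pi_*\cO(H-E) = \mathcal{I}_{\mathds{L}}(1) \subset \cO_{\mathds{P}^n}(1)$, the ideal sheaf of the linear center twisted by $\cO(1)$, and $R^j\pi_*\cO(H-E) = 0$ for $j>0$. Then $H^i(P^n_m, \cO(H-E)) = H^i(\mathds{P}^n, \mathcal{I}_{\mathds{L}}(1))$; from $0 \to \mathcal{I}_{\mathds{L}}(1) \to \cO_{\mathds{P}^n}(1) \to \cO_{\mathds{L}}(1) \to 0$ and the surjectivity of $H^0(\cO_{\mathds{P}^n}(1)) \twoheadrightarrow H^0(\cO_{\mathds{L}}(1))$ (restriction of linear forms to a linear subspace) together with $H^i(\cO_{\mathds{P}^n}(1)) = H^i(\cO_{\mathds{L}}(1)) = 0$ for $i>0$, the vanishing $H^i(\cO(H-E)) = 0$ for $i>0$ follows. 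For $H^i(\cO(-(H-E))) = H^i(\cO(E-H))$: one computes $\pi_*\cO(E - H)$ and its higher direct images — here $\cO(E-H) = \pi^*\cO(-1) \otimes \cO(E)$, and pushing forward via $R\pi_*$ gives a complex whose cohomology one reads off from the $\cO_E(E) = \cO_E(-1)$ computation twisted by $\cO(-1)$; since all these pieces have vanishing cohomology on $\mathds{P}^n$, we get $H^i(\cO(E-H)) = 0$ for all $i \geq 0$.

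\smallskip

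The main obstacle is bookkeeping rather than conceptual: one must consistently keep track of how $\cO(E)$ restricts to the exceptional divisor $E \cong \mathds{P}^m \times \mathds{P}^{n-m-1}$ — it is $\cO(0,-1)$, the pullback of $\cO_{\mathds{P}^{n-m-1}}(-1)$ — and feed this correctly into the long exact sequences and into the computation of the $R^j\pi_*$ of the various twists $\cO(aH + bE)$. The constraints $n \geq 2$ and $0 \leq m \leq n-2$ (so that $n-m-1 \geq 1$) are exactly what makes the cohomology of $\cO_E(E)$ and its twists vanish in all degrees, and should be invoked explicitly. Once the direct image sheaves on $\mathds{P}^n$ are identified, every remaining vanishing is a standard fact about cohomology of line bundles on projective space and on products of projective spaces.
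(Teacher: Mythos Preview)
Your proposal is correct and follows essentially the same approach as the paper: both arguments hinge on the short exact sequence $0 \to \cO_{P^n_m}(-E) \to \cO_{P^n_m} \to \cO_E \to 0$ and its twists, together with the K\"unneth formula on $E \cong \mathds{P}^m \times \mathds{P}^{n-m-1}$. The only stylistic difference is that you push forward along $\pi$ and invoke Leray/projection formula (reducing to cohomology on $\mathds{P}^n$), whereas the paper carries out all the long exact sequences directly on $P^n_m$; for instance, your sequence $0 \to \mathcal{I}_{\mathds{L}}(1) \to \cO_{\mathds{P}^n}(1) \to \cO_{\mathds{L}}(1) \to 0$ is precisely $R\pi_*$ applied to the paper's $0 \to \cO(H-E) \to \cO(H) \to \cO_E(H) \to 0$. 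One small caution: the paper records $\cO_E(E) = \cO_E(1,-1)$ while you write $\cO(0,-1)$ --- the exact first bidegree depends on conventions for the normal bundle, but in every convention the second entry is $-1$, which is all either proof uses.
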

\begin{proof}
    Consider the short exact sequence
    \begin{align*}
        0\to\cO_{P^n_m}(-E)\to \cO_{P^n_m}\to \cO_E\to 0
    \end{align*}
    Taking cohomology we obtain long exact sequence:
    \begin{align*}
        \cdots\to H^i(\cO_{P^n_m}(-E))\to H^i(\cO_{P^n_m})\to H^i(\cO_E)\to \cdots
    \end{align*}
    Since $E\cong \mathds{P}^m\times\mathds{P}^{n-m-1}$, by Kunneth formula, we get $H^0(\cO_E)=k$ and $H^i(\cO_E)=0$ for $i>0$. Moreover, it is clear that the map $H^0(\cO_{P^n_m})\to H^0(\cO_E)$ is an isomorphism, hence $h^i(\cO_{P^n_m}(-E))=0$ for all $i>0$. Since $E$ is effective, $h^0(\cO_{P^n_m}(-E))=0$. \\
    Using the same argument, we can show $h^i(\cO_{P^n_m}(-H))=h^i(\cO_{P^n_m}(-H+E))=0$ for all $i\geq 0$.\\
    
    Twist the short exact sequence \begin{align}\label{SE}
        0\to\cO_{P^n_m}(-E)\to \cO_{P^n_m}\to \cO_E\to 0
    \end{align}
    by $\cO_{P^n_m}(E)$, we obtain
    \begin{align*}
        0\to\cO_{P^n_m}\to \cO_{P^n_m}(E)\to \cO_E(E)\to 0
    \end{align*}
    Taking cohomology we obtain long exact sequence:
    \begin{align*}
        \cdots\to H^i(\cO_{P^n_m})\to H^i(\cO_{P^n_m}(E))\to H^i(\cO_E(E))\to \cdots
    \end{align*}
    Denoting $Pic(E)\cong Pic(\mathds{L})\times Pic(\mathds{P}^{n-m-1})$. Then $\cO_E(E)=\cO_E(1,-1)$. Using Kunneth formula, we see $h^i(\cO_E(E))=0$ for all $i\geq 0$. From this one easily see $h^i(\cO_{P^n_m}(E))=0$ for all $i> 0$.\\
    Similarly, we can get $h^i(\cO_{P^n_m}(H))=0$ for all $i>0$.
    
    Twist (\ref{SE}) by $O(H)$  we obtain short exact sequence 
    \begin{align*}
         0\to\cO_{P^n_m}(H-E)\to \cO_{P^n_m}(H)\to \cO_E(H)\to 0
    \end{align*}
    Note $\cO_E(H)=\cO_E(1,0)$. Again using Künneth formula and noting $H^0(\cO_{P^n_m}(H))\to H^0(\cO_E(H))$ is surjective, we see $h^i(\cO_{P^n_m}(H-E))=h^i(\cO_{P^n_m}(H))=0$ for all $i>0$. This finishes the proof.
\end{proof}

\subsection{Quivers and quiver representations} See also \cite{Br}.\\
A quiver $Q$ is given by two sets $Q_{vx}$ and $Q_{ar}$, where the first set is the set of vertices and the second is the set of arrows, along with two functions $s,t:Q_{ar}\to Q_{vx}$ specifying the source and target of an arrow. The path algebra $\mathbf{k}Q$ is the associative $\mathbf{k}$-algebra whose underlying vector space has a basis consists of elements of $Q_{ar}$. The product of two basis elements is defined by concatenation of paths if possible, otherwise $0$. The product of two general elements is defined by extending the above linearly.\
    A bound quivers is a pair $(Q,I)$. Here $Q$ is a quiver and $I$ is a two sided ideal of $\mathbf{k}Q$ generated by elements of the form $\sum_{i=1}^nk_ip_i$, where $k_i\in \mathbf{k}^*$ and $p_i$ are paths with same heads and same tails for $i\in\{1,\ldots,n\}$. We simply use $Q$ to denote this pair when the existence of $I$ is understood.\\ 
    
    Let $Q$ be a quiver. A quiver representation $R=(R_v,r_a)$ consists of a vector space $R_v$ for each $v\in Q_{vx}$ and a morphism of vector spaces $r_a:R_{s(a)}\to R_{t(a)}$ for each $a\in Q_{ar}$. For a bound quivers $(Q,I)$, a representation  $R=(R_v,r_a)$ is same as above, with the additional condition that 
    \begin{align*}
        \sum_{i=1}^nk_ir_{p_i}=0
    \end{align*} if $\sum_{i=1}^nk_ip_i$ is a generator of $I$. A subrepresentation of $R$ is a pair $R'=(R_v',r_a')$ where $R_v'$ is a subspace of $R_v$ for each $v\in Q_{vx}$ and $r'_a:R'_{s(a)}\to R'_{t(a)}$ a morphism of vector spaces for each $a\in Q_{ar}$ such that $$r'_a=r_a|_{R'_{s(a)}}$$ and 
    \begin{equation}\label{subset}
        r_a(R'_{s(a)})\subset R'_{t(a)}
    \end{equation}
    Thus we have the commutative diagram
    \[ \begin{tikzcd}
      R'_i \arrow[r,"r'_a"] \arrow{d}{\iota_i} & R'_j \arrow{d}{\iota_j} \\%
      R_i \arrow{r}{r_a}& R_j
      \end{tikzcd}
      \]
    for any arrow $a$ from $i$ to $j$.
    We use $R'\subset R$ to denote that $R'$ is a subrepresentation of $R$.

    If the vertices of a quiver has a natural ordering, as it will be the case when we discuss quiver of sections of an exceptional collection of line bundles, we define dimension vector $\vec{d}$ so $d_i$ is the dimension of the vector space $R_i$ at that vertex. We call the set of vertices where $R_v$ has positive dimension the support of $R$.\\
    
    In this paper, we are particularly interested in representations with dimension vector $\mathbf{1}=(1,\ldots,1)$. Notice when $R$ is a representation with dimension vector $\mathbf{1}$, and $R'\subset R$, all the inclusion maps $$\iota_k:R'_k\to R_k$$ are either zero map or identity. We prove the following easy lemma:
    \begin{lem}\label{subrep}
      Let $(Q,I)$ be a bound quivers whose vertices are label by $\{0,1,2,\ldots,n\}$ and $R$ be a representation of $Q$ with dimension vector $\mathbf{1}$.Then any subrepresentation $R'$ is determined by its dimension vector $\vec{d}$. Moreover, a vector $\vec{d}$ of size $n+1$ with entries $0$ and $1$ is the dimension vector of a subrepresentation of $R$ if and only if $r_a=0$ for all $a\in Q_{ar}$ with $d_{s(a)}=1$ and $d_{t(a)}=0$.
      \end{lem}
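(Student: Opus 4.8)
The plan is to unwind the definitions, the key point being that a one-dimensional $\mathbf{k}$-vector space has exactly two subspaces, namely $0$ and itself. So for a subrepresentation $R'\subseteq R$ and each vertex $k$, the subspace $R'_k\subseteq R_k\cong\mathbf{k}$ equals $R_k$ if $d_k=1$ and equals $0$ if $d_k=0$. Since the structure maps of a subrepresentation are by definition the restrictions $r'_a=r_a|_{R'_{s(a)}}$, this already shows that $R'$ is recovered entirely from $\vec d$, proving the first assertion.

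For the second assertion I would argue the two implications directly. If $R'$ is a subrepresentation with dimension vector $\vec d$ and $a$ is an arrow with $d_{s(a)}=1$ and $d_{t(a)}=0$, then $R'_{s(a)}=R_{s(a)}$ while $R'_{t(a)}=0$, so the containment \eqref{subset} reads $r_a(R_{s(a)})\subseteq 0$, forcing $r_a=0$. Conversely, given a $\{0,1\}$-vector $\vec d$ with $r_a=0$ whenever $d_{s(a)}=1$ and $d_{t(a)}=0$, define $R'$ by setting $R'_k=R_k$ or $0$ according to whether $d_k$ is $1$ or $0$, and $r'_a:=r_a|_{R'_{s(a)}}$. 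To see this is a subrepresentation one checks \eqref{subset} by cases on the pair $(d_{s(a)},d_{t(a)})$: if $d_{s(a)}=0$ the source space is $0$; if $d_{t(a)}=1$ the target space is all of $R_{t(a)}$; and if $d_{s(a)}=1,\ d_{t(a)}=0$ the map $r_a$ itself vanishes by hypothesis. In every case $r_a(R'_{s(a)})\subseteq R'_{t(a)}$, and the dimension vector of $R'$ is visibly $\vec d$.

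The one remaining point to address is that $R'$ must be a representation of the bound quiver $(Q,I)$, not merely of the underlying quiver: but for any path $p$ the composite $r'_p$ is simply the restriction of $r_p$ to the appropriate subspace, so any relation $\sum_i k_i p_i\in I$ satisfied by $R$ (i.e.\ $\sum_i k_i r_{p_i}=0$) is automatically satisfied by $R'$ after restriction. This is the only step that uses the bound-quiver structure at all. I do not expect a genuine obstacle here — the lemma is a pure unwinding of the definition of subrepresentation specialized to dimension vector $\mathbf{1}$ — the only thing needing a little care is organizing the three-case check cleanly and not forgetting to verify the relations.
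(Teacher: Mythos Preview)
Your proof is correct and follows the same approach as the paper's: both argue that a one-dimensional space has only $0$ and itself as subspaces, so the dimension vector pins down $R'$, and then reduce the existence question to checking condition~\eqref{subset}, which is automatic except in the case $d_{s(a)}=1$, $d_{t(a)}=0$. Your write-up is in fact more thorough than the paper's, which omits the explicit case analysis and does not separately address the relations in $I$.
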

      \begin{proof}
      Since $\dim R_i=1$, its subspaces are determined by dimensions. Moreover, we see the morphism of subspaces $r'_a$ are restrictions of $r_a$, hence the dimension vector $\vec{d}$ determines $R'_i$ for all $i\in Q_{vx}$.\\
      Given any vector $\vec{d}$ as in the second part of the lemma, it is the dimension of a vector subspace if (\ref{subset}) is satisfied. Note (\ref{subset}) is always true unless for arrows with $d_{s(a)}=1$ and $d_{t(a)}=0$, in which case we must have $r_a=0$.
      \end{proof}
    
    \subsection{Moduli space of semistable representations of a quiver} See also \cite{King},\cite{Re}.\\
    Given a bound quivers $(Q,I)$,a weight is an element $\theta\in\Z^N$ where $N=|Q_{vx}|$ such that $\sum_{i=1}^{N}\theta_i=0$. Let $\theta=(\theta_1,\ldots,\theta_{N})$ be a weight, we defined its toric form to be
    \begin{align*}
        (-\theta_1,-\theta_1-\theta_2,\ldots,-\theta_1-\theta_2-\ldots-\theta_{N-1})\in \Z^{n-1}
    \end{align*}
    It is an easy exercise to see that one can recover a weight from its toric form.
    \begin{defn}
      A weight is admissible if every entry of its toric form is a positive integer.
    \end{defn}
    For a weight $\theta$, the weight function is defined by by:
    \begin{equation*}
        \theta(S)=\sum_{i=1}^N d_i \theta_i
    \end{equation*}
    where $S$ is a representation of $Q$ and $d_i$ and $\theta_i$ are the $i$-th entries of $\vec{d}$ and $\theta$ respectively. We recall the definition of semi-stability:
    \begin{defn}
        A representation $R$ is $\theta$-semistabe if for any subrepresentation $R'\subset R$
    \begin{equation*}
        \theta(R')\geq0
    \end{equation*}
    $R$ is $\theta$-stable if all the above inequalities are strict.
    \end{defn}
    
    We restrict our attention to $R$ with dimension vector $\mathbf{1}$. Given a bound quivers $(Q,I)$, we can associate to it an affine shceme $\mathrm{Rep(Q)}$ called the representation scheme of $(Q,I)$. The coordinate ring of this affine shceme is the quotient of $\mathbf{k}[a\in Q_{ar}]$ by the ideal $J$ which is generated by generators $\sum_{i=1}^nk_ip_i$ of $I$ treated as  elements in the above polynomial ring. It is obvious from the definition that closed points of representation scheme are in 1-to-1 correspondence with representations of $Q$ with dimension vector $\mathbf{1}$. For a weight $\theta$, the set of $\theta$-semistable representations forms an open subscheme $\mathrm{Rep(Q)}^{SS}_\theta$ of $\mathrm{Rep(Q)}$, the set of $\theta$-stable representations forms an open subscheme $\mathrm{Rep(Q)}^{S}_\theta$ of $\mathrm{Rep(Q)}^{SS}_\theta$.
    
    The group $(\mathbf{k}^*)^{Q_{vx}}$ acts by incidence on $\mathrm{Rep(Q)}$, in other words, it acts by $(g\cdot a)=g_{t(a)}r_ag^{-1}_{s(a)}$. Apparently, the diagonal subgroup $\mathbf{k}_{\text{diag}}^*$ of $(\mathbf{k}^*)^{Q_{vx}}$ consisting of elements of the form $(k,k,\ldots,k)$ for $k\in\mathbf{k}^*$ acts trivially on $\mathrm{Rep(Q)}$. So it is natural to only consider the action of $\mathrm{PGL(\mathbf{1})}:=(\mathbf{k}^*)^{Q_{vx}}/\mathbf{k}_{\text{diag}}^*$. 
    \begin{defn}
      Two representations of dimension vector $\mathbf{1}$ are isomorphic if they are in the same orbit under the action of $\mathrm{PGL(\mathbf{1})}$.\
    \end{defn}

    Give a weight $\theta$, the moduli space of $\theta$-semistable representation with dimension vector $\mathbf{1}$ is the GIT quotient
    \begin{align*}
        M_\theta:&=\mathrm{Rep(Q)}//_\theta \mathrm{PGL(\mathbf{1})}\\
        &=\mathrm{Rep(Q)}^{SS}_\theta//\mathrm{PGL(\mathbf{1})}
    \end{align*}
    We mention a few facts about $M_\theta$. For details, the readers are referred to \cite{King}.
    An equivalent definition of $M_{\theta}$ is to consider the graded ring
       $$ B_{\theta}= \bigoplus_{r\geq 0}B(r\theta)$$
    where $B(r\theta)$ is $r\theta$-semi-invariant functions in the coordinate ring of $\mathrm{Rep(Q)}$. Then the GIT quotient is defined as 
    \begin{equation*}
        M_\theta=\Proj(B_\theta)
    \end{equation*}
    From this definition, it is easy to see that $M_{\theta}$ is a reduced projective scheme. Note if all $\theta$-semistable representations are $\theta$-stable, i.e. $\mathrm{Rep(Q)}^{SS}_{\theta}=\mathrm{Rep(Q)}^{S}_{\theta}$, then $M_{\theta}$ is the fine moduli space of $\theta$-stable representations, in particular, the closed points of $M_{\theta}$ are in 1-to-1 correspondence with the isomorphism classes of $\theta$-stable representations.
    \
    We now give an easy criterion for obtaining fine moduli spaces as above.
    \begin{lem}\label{fine}
    With the notions above, if for any proper nonempty subset $P$ of $Q_{vx}$, we have $\sum_{i\in P}\theta_i\neq 0$, then any semistable representation $R$ is in fact stable. In particular, $M_{\theta}$ is a fine moduli space. 
    \end{lem}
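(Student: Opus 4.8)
The plan is to translate the semistability inequalities into inequalities indexed by subsets of $Q_{vx}$ using Lemma \ref{subrep}, and then to observe that the hypothesis upgrades every weak inequality to a strict one.

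First I would unwind the definitions for a representation $R$ with dimension vector $\mathbf{1}$. By Lemma \ref{subrep}, any subrepresentation $R'\subset R$ is determined by its dimension vector $\vec d$, whose entries all lie in $\{0,1\}$; write $P=\{i\in Q_{vx}:d_i=1\}$ for its support. Then directly from the definition of the weight function,
\[
    \theta(R')=\sum_{i=1}^{N}d_i\theta_i=\sum_{i\in P}\theta_i .
\]
The zero subrepresentation corresponds to $P=\emptyset$, and $R$ itself to $P=Q_{vx}$ (and in both of these cases the sum is $0$, automatically, since $\sum_{i=1}^N\theta_i=0$ for a weight); a subrepresentation $R'$ with $0\neq R'\neq R$ corresponds exactly to a \emph{proper nonempty} subset $P$.

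Next I would argue the implication. Suppose $R$ is $\theta$-semistable, so $\theta(R')\geq 0$ for every subrepresentation $R'$, i.e. $\sum_{i\in P}\theta_i\geq 0$ for every subset $P$ that occurs as the support of a subrepresentation of $R$. If in addition $P$ is proper and nonempty, the hypothesis of the lemma gives $\sum_{i\in P}\theta_i\neq 0$; combining this with the weak inequality yields $\sum_{i\in P}\theta_i>0$, that is $\theta(R')>0$. Since this holds for every subrepresentation $R'$ with $0\neq R'\neq R$, the representation $R$ is $\theta$-stable. Therefore $\mathrm{Rep(Q)}^{SS}_{\theta}=\mathrm{Rep(Q)}^{S}_{\theta}$, and by the fact recalled immediately before the lemma, $M_{\theta}$ is the fine moduli space of $\theta$-stable representations.

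I do not expect a genuine obstacle here: the statement is essentially a bookkeeping consequence of Lemma \ref{subrep} together with the definitions of semistability and stability. The only point that needs a word of care is the matching of the two extreme subsets $\emptyset$ and $Q_{vx}$ with the two trivial subrepresentations, so that the hypothesis — which only constrains proper nonempty subsets — is exactly what is required, and this matching is immediate from $\sum_{i=1}^N\theta_i=0$.
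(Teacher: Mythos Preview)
Your argument is correct and is essentially the same as the paper's: both identify $\theta(R')$ with $\sum_{i\in\mathrm{supp}(R')}\theta_i$ via Lemma~\ref{subrep} and then use the hypothesis to rule out $\theta(R')=0$ for any proper nonzero $R'$. The paper phrases it as a one-line contrapositive, while you spell out the support correspondence and the edge cases $P=\emptyset$ and $P=Q_{vx}$ explicitly, but the content is identical.
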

    \begin{proof}
      If $R$ is strictly semistable, then there exist a proper nonzero subrepresentation $R'$ such that $ \theta(R')=\sum_{i\in supp(R')}\theta_i =0$, but this cannot happen given the conditions in the statement.
    \end{proof}

    \subsection{Quivers of Sections} The main reference for this section is Craw-Smith\cite{CS} and Craw-Winn\cite{CW}. We mention that our indexing is different since we are concerned with the quiver with path algebra $\mathcal{A}^{op}$ instead of $\mathcal{A}$ as in the introduction.\\
    \noindent Let $\mathcal{L}=\{L_1,L_2,\ldots,L_n\}$ be a collection of line bundles on a projective variety $X$. For $1\leq i,j\leq n$, we call a section $s\in H^0(X,L_j^{\vee}\otimes L_i)$ irreducible if $s$ does not lie in the images of the multiplication map
    \begin{equation*}
        H^0(X,L_j^{\vee}\otimes L_k)\otimes_{\mathbf{k}}H^0(X,L_k^{\vee}\otimes L_i)\to H^0(X,L_j^{\vee}\otimes L_i)
    \end{equation*}
    for $k\neq i,j$. 
    \begin{defn}
      The quiver of sections of the collection $\mathcal{L}$ on $X$ is defined to be a quiver with vertex set $Q_{vx}=\{1,\ldots,n\}$ and where the arrows from $i$ to $j$ corresponds to a basis of irreducible sections of $H^0(X,L_{(n+1)-j}^{\vee}\otimes L_{(n+1)-i})$.
    \end{defn}
    We mention one of the basic properties of a quiver of sections.
    \begin{lem}{\cite{CW}}
    The quiver of sections $Q$ is connected, acyclic and $1\in Q_{vx}$ is the unique source.
    \end{lem}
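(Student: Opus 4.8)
The plan is to deduce all three assertions from two properties of the (ordered) collection $\mathcal{L}=\{L_1,\dots,L_n\}$ that hold in the setting of this paper: that there are no ``backward'' sections, i.e.\ $H^0(X,L_a^\vee\otimes L_b)=0$ whenever $a>b$, and that consecutive sections never vanish, i.e.\ $H^0(X,L_p^\vee\otimes L_{p+1})\neq0$ for $1\le p\le n-1$. Concretely, for $(L_1,L_2,L_3)=(\mathcal{O}_{P^n_m},\mathcal{O}_{P^n_m}(H-E),\mathcal{O}_{P^n_m}(H))$, the first property holds because the proof of Lemma~\ref{slo} yields $h^0(\mathcal{O}_{P^n_m}(E-H))=h^0(\mathcal{O}_{P^n_m}(-H))=h^0(\mathcal{O}_{P^n_m}(-E))=0$, and the second because the exact sequences there give $h^0(\mathcal{O}_{P^n_m}(H-E))\neq0$ and $h^0(\mathcal{O}_{P^n_m}(E))=1$.

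First I would read off acyclicity and the fact that $1$ is a source. An arrow $i\to j$ of $Q$ is, by definition, a basis element among the irreducible sections of $H^0(X,L_{(n+1)-j}^\vee\otimes L_{(n+1)-i})$; in particular this group is nonzero, so the no-backward-sections property forces $(n+1)-j\le(n+1)-i$, i.e.\ $i\le j$. The case $i=j$ does not occur, since $H^0(X,L_{(n+1)-i}^\vee\otimes L_{(n+1)-i})=H^0(X,\mathcal{O}_X)=\mathbf{k}$ is spanned by the identity, which is not counted as an arrow. Hence every arrow of $Q$ strictly increases the vertex label; consequently $Q$ has no oriented cycle, and no arrow has target $1$, so $1$ is a source.

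Next I would show that $1$ is the \emph{unique} source and that $Q$ is connected, by producing, for each $j\in\{2,\dots,n\}$, an arrow $(j-1)\to j$. Put $p=(n+1)-j\in\{1,\dots,n-1\}$ and choose $0\neq s\in H^0(X,L_p^\vee\otimes L_{p+1})$. I claim $s$ is irreducible: for every $k\neq p,p+1$, one of the two multiplication factors $H^0(X,L_p^\vee\otimes L_k)$ and $H^0(X,L_k^\vee\otimes L_{p+1})$ vanishes by the no-backward-sections property---the former when $k<p$, the latter when $k>p+1$---so $s$ lies in the image of none of the relevant multiplication maps. Matching $L_{(n+1)-j}^\vee\otimes L_{(n+1)-i}=L_p^\vee\otimes L_{p+1}$ forces $i=j-1$, so $s$ gives the arrow $(j-1)\to j$. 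Therefore every vertex $j\ge2$ receives an arrow and is not a source, so $1$ is the unique source; and the chain $1\to2\to\dots\to n$ assembled from these arrows visits every vertex, so the underlying graph of $Q$ is connected.

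The argument is entirely formal once the two properties of $\mathcal{L}$ are in hand, so I do not expect a real obstacle; the only points to keep straight are the order reversal $i\mapsto(n+1)-i$ built into the definition of $Q$ and the convention that identity endomorphisms are not arrows. The one genuinely load-bearing input is the non-vanishing of the consecutive groups $H^0(X,L_p^\vee\otimes L_{p+1})$: the absence of backward sections alone does not force a unique source (for instance $\{\mathcal{O},\mathcal{O}(1,0),\mathcal{O}(0,1)\}$ on $\mathds{P}^1\times\mathds{P}^1$ has no backward sections, yet its quiver of sections has two sources), and it is at this point that the concrete geometry of $P^n_m$ enters through Lemma~\ref{slo}.
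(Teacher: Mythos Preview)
The paper does not give its own proof of this lemma: it is stated with a bare citation to \cite{CW} and no argument. So there is nothing to compare your proposal against at the level of proof strategy.

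That said, your sketch is correct and self-contained for the specific collection $\{\mathcal{O}_{P^n_m},\mathcal{O}_{P^n_m}(H-E),\mathcal{O}_{P^n_m}(H)\}$ that is actually used later, and more generally for any ordered collection satisfying your two hypotheses (no backward sections, nonzero consecutive sections). You have the indexing conventions right, and the chain $1\to2\to\cdots\to n$ you build does exactly what is needed for connectedness and uniqueness of the source. One small remark: when you exclude loops you appeal to the convention that the identity is not an arrow; this is indeed how quivers of sections are set up in \cite{CS} and \cite{CW}, so that step is fine, but it is a convention rather than a consequence of irreducibility per se.

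Your closing observation is a genuine contribution beyond what the paper says: the lemma as stated, for an arbitrary collection $\mathcal{L}$, really does need something like the ``consecutive sections nonvanish'' hypothesis, and your $\mathds{P}^1\times\mathds{P}^1$ example shows this. In the cited reference the result is proved under standing assumptions on $\mathcal{L}$ that rule out such examples; the paper is silently importing those assumptions. For the purposes of this paper nothing is at stake, since the only collection ever used visibly satisfies both of your hypotheses via Lemma~\ref{slo}, but you are right to flag the gap between the general-sounding statement and what is actually needed.
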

    The quiver of sections only include information about the sections in $H^0(X,L_{(n+1)-j}^{\vee}\otimes L_{(n+1)-i})$, but left relations between them behind. We now define a two sided ideal
    \begin{defn}
    Let $I_\mathcal{L}$ be a two sided ideal in $\mathbf{k}Q$
    \begin{equation*}
        I_\mathcal{L}=\big(\sum_{k=1}^Na_kp_k|p_k \text{ are paths from }i \text{ to }j\ \text{ and }\sum_{k=1}^Na_kp_k \text{ represents } 0 \text{ in } H^0(X,L_{(n+1)-j}^{\vee}\otimes L_{(n+1)-i})\big)
    \end{equation*}
    We call the pair $(Q,I_\mathcal{L})$ the bound quiver of sections of the collection $\mathcal{L}$.
    \end{defn}
    \begin{prop}{\cite{CS}\cite{CW}}
      The quotient algebra $\mathbf{k}Q/I_\mathcal{L}$ is isomorphic to $\mathcal{A}^{op}=\mathrm{End}_{\cO_X}(\oplus_{i=1}^nL_i^\vee)$ and for $1\leq i,j\leq n$, we have $e_j(\mathbf{k}Q/I_\mathcal{L})e_i\cong H^0(X,L_{(n+1)-j}^{\vee}\otimes L_{(n+1)-i})$.
    \end{prop}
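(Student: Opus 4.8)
This is the standard identification of a path algebra modulo its ``obvious'' relations with an endomorphism algebra, cf.\ \cite{CS},\cite{CW}; the plan is to exhibit it as the descent of one explicit algebra homomorphism and to read off the statement on graded pieces along the way. Throughout write $i'=(n+1)-i$ and set $N_i=L_{i'}^{\vee}$, so that $\bigoplus_{i=1}^{n}L_i^{\vee}=\bigoplus_{i=1}^{n}N_i$ and $\mathcal{A}^{op}=\mathrm{End}_{\cO_X}\!\big(\bigoplus_i N_i\big)$. Under this relabelling the vertex idempotents of $\mathcal{A}^{op}$ are the projections $p_i$ onto $N_i$, one has $p_j\,\mathcal{A}^{op}\,p_i=\Hom_{\cO_X}(N_i,N_j)=H^0(X,N_i^{\vee}\otimes N_j)=H^0(X,L_{j'}^{\vee}\otimes L_{i'})$, and the multiplication of $\mathcal{A}^{op}$ restricted to these summands is precisely the tensor--multiplication map $H^0(X,L_{j'}^{\vee}\otimes L_{k'})\otimes_{\mathbf{k}}H^0(X,L_{k'}^{\vee}\otimes L_{i'})\to H^0(X,L_{j'}^{\vee}\otimes L_{i'})$. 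By the very definition of the quiver of sections, the arrows $i\to j$ of $Q$ form a $\mathbf{k}$-basis of the irreducible elements of this same space $H^0(X,L_{j'}^{\vee}\otimes L_{i'})=\Hom_{\cO_X}(N_i,N_j)$; this bigrading by ordered pairs of vertices is the structure the whole argument is built on.

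First I would define $\Phi\colon\mathbf{k}Q\to\mathcal{A}^{op}$ by sending $e_i$ to $p_i$ and an arrow $a\colon i\to j$ to the morphism $s_a\in\Hom_{\cO_X}(N_i,N_j)$ it represents, viewed inside $p_j\,\mathcal{A}^{op}\,p_i$. Since $\mathbf{k}Q$ is freely generated by the idempotents and arrows subject only to path concatenation, this extends uniquely to a homomorphism of $\mathbf{k}$-algebras; explicitly a path from $i$ to $j$ is sent to the product of the sections of its arrows, formed via the tensor--multiplication maps above, an element of $H^0(X,L_{j'}^{\vee}\otimes L_{i'})$. In particular $\Phi$ carries $e_j\,(\mathbf{k}Q)\,e_i$ into $p_j\,\mathcal{A}^{op}\,p_i$, so $\Phi$ is homogeneous for the vertex bigrading on both sides.

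Next I would prove $\Phi$ is surjective, i.e.\ that every element of $H^0(X,L_{j'}^{\vee}\otimes L_{i'})$ is a $\mathbf{k}$-linear combination of products of irreducible sections. Since $Q$ is finite and acyclic by the preceding lemma, directed paths between two fixed vertices have bounded length, and I would induct on that length. By definition an element $s\in H^0(X,L_{j'}^{\vee}\otimes L_{i'})$ is either irreducible — hence the $\Phi$-image of a single arrow — or a combination of products $t\cdot u$ with $t\in H^0(X,L_{j'}^{\vee}\otimes L_{k'})$, $u\in H^0(X,L_{k'}^{\vee}\otimes L_{i'})$ and $k\neq i,j$; applying the inductive hypothesis to $t$ and $u$ and concatenating the resulting paths writes $s$ as a combination of $\Phi$-images of paths from $i$ to $j$. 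Hence $\Phi$ is onto each graded piece, so onto $\mathcal{A}^{op}$.

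Finally I would identify the kernel with $I_\mathcal{L}$ and conclude. The inclusion $I_\mathcal{L}\subseteq\ker\Phi$ is immediate: a generator $\sum_k a_k p_k$ of $I_\mathcal{L}$ is by construction a combination of paths from some $i$ to some $j$ representing $0$ in $H^0(X,L_{j'}^{\vee}\otimes L_{i'})$, which is exactly what $\Phi$ computes, and $\ker\Phi$ is a two-sided ideal. For the reverse inclusion, both $\ker\Phi$ and $I_\mathcal{L}$ are homogeneous for the vertex bigrading ($\ker\Phi$ because $\Phi$ is homogeneous; $I_\mathcal{L}$ because its generators are homogeneous and pre- or post-multiplying a homogeneous element by paths keeps it homogeneous), so it suffices to compare graded pieces; and on $e_j\,(\mathbf{k}Q)\,e_i$, which is spanned by the paths from $i$ to $j$, the map $\Phi$ simply records which section of $H^0(X,L_{j'}^{\vee}\otimes L_{i'})$ each path represents, so an element of $e_j\,(\mathbf{k}Q)\,e_i$ is killed by $\Phi$ exactly when it is a combination of paths representing $0$, i.e.\ exactly when it lies in $I_\mathcal{L}$. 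Thus $\ker\Phi=I_\mathcal{L}$, $\Phi$ descends to an isomorphism $\mathbf{k}Q/I_\mathcal{L}\xrightarrow{\ \sim\ }\mathcal{A}^{op}$, and comparing graded pieces yields $e_j(\mathbf{k}Q/I_\mathcal{L})e_i\cong p_j\,\mathcal{A}^{op}\,p_i\cong H^0(X,L_{(n+1)-j}^{\vee}\otimes L_{(n+1)-i})$. The one place that requires real care is the termination of the recursion in the surjectivity step — one must be sure that repeatedly factoring out reducible sections actually bottoms out in products of irreducibles, which is precisely where finiteness and acyclicity of $Q$ are used, bounding the recursion depth by the longest directed path; everything else is formal bookkeeping once the bigrading is in place.
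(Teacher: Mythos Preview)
The paper does not actually prove this proposition; it is quoted from \cite{CS} and \cite{CW} and stated without argument. Your proof is exactly the standard one found in those references: define the algebra map $\Phi$ on idempotents and arrows, prove surjectivity on each bigraded piece by induction using the definition of irreducible section, and identify $\ker\Phi$ with $I_{\mathcal{L}}$ by comparing bigraded pieces. So there is nothing in the paper to compare against, and your approach is the expected one.

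One small wording issue in the surjectivity step: the dichotomy ``either $s$ is irreducible --- hence the $\Phi$-image of a single arrow --- or $s$ is a combination of products'' is not literally correct. The arrows are a basis of a \emph{complement} to the sum $W_{ij}:=\sum_{k\neq i,j}\mathrm{Im}\big(H^0(L_{j'}^{\vee}\otimes L_{k'})\otimes H^0(L_{k'}^{\vee}\otimes L_{i'})\to H^0(L_{j'}^{\vee}\otimes L_{i'})\big)$, so an arbitrary $s$ decomposes as $s=s_0+s_1$ with $s_0$ a $\mathbf{k}$-linear combination of arrows (not a single arrow) and $s_1\in W_{ij}$; neither summand need itself be irreducible in the sense of the definition. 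With this corrected decomposition the induction on the length of the longest $i\to j$ path goes through exactly as you wrote, and the rest of the argument --- in particular the homogeneity of $\ker\Phi$ and of $I_{\mathcal{L}}$ and their coincidence degree by degree --- is correct.
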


    Given any weight $\theta$ for $(Q,I_\mathcal{L})$, we can consider the moduli space of semistable representations $M_\theta$. There is a tautological rational map
    \begin{equation*}
        T:X\dashrightarrow M_\theta
    \end{equation*}
    so that if $T$ is defined at $x$, then  $$T(x)=\bigoplus_{i=0}^n(L_i^\vee)_x$$ Moreover, $T$ is defined at $x$ if $\bigoplus_{i=0}^n(L_i^\vee)_x$ can be represented by a $\theta$-semistable representation.
    
    \subsection{Kronecker Quiver} Fix $n\geq 2$. Let $Q_0$ be the $(n+1)$-Kronecker quiver:
\[
    \begin{tikzcd}[column sep=huge,row sep=huge]
    1
    \arrow[r,shift left=2ex,"x_0"]
    \arrow[r,"\svdots","x_n" swap]
    & 2
    \end{tikzcd}
    \]
For $p>0$, let $\theta_0=(-p,p)$.Let $M_{\theta_0}$ be the moduli space of semistable representations of $Q_0$ with dimension vector $(1,1)$. The following fact is well known:
\begin{prop}
    The tautological rational map:
    \begin{align*}
        T_0:\mathds{P}^n\dashrightarrow M_{\theta_0}
    \end{align*}
    is an isomorphism.
\end{prop}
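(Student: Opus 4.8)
The plan is to compute $M_{\theta_0}$ directly from its description as a $\Proj$ of a ring of semi-invariants, and then to recognise the tautological map as the associated quotient morphism. First I would unwind the representation scheme: a representation of $Q_0$ with dimension vector $(1,1)$ is simply a choice of $n+1$ scalars, one for each arrow, so $\mathrm{Rep}(Q_0)\cong\aA^{n+1}$ with coordinate ring $\mathbf{k}[x_0,\ldots,x_n]$, and $\PGL\cong\mathbf{k}^*$ acts on it by simultaneous scaling of the coordinates. For stability, Lemma \ref{subrep} shows that the only proper nonzero subrepresentations of such an $R$ are $\vec{d}=(0,1)$, which is always a subrepresentation (vertex $1$ is the unique source) and has weight $\theta_0(R')=p>0$, and $\vec{d}=(1,0)$, which is a subrepresentation exactly when every arrow acts by $0$, in which case it has weight $-p<0$. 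Hence the $\theta_0$-semistable locus is $\aA^{n+1}\setminus\{0\}$; moreover no proper subrepresentation has weight $0$, so by Lemma \ref{fine} semistability coincides with stability and $M_{\theta_0}$ is a fine moduli space.

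Next I would compute the ring of semi-invariants. Checking the conventions above, a polynomial in $\mathbf{k}[x_0,\ldots,x_n]$ is $r\theta_0$-semi-invariant precisely when it is homogeneous of degree $rp$ — this is the step where $p>0$ is used, since for the opposite sign only the constants would survive and $\Proj$ would be empty. Thus $B_{\theta_0}=\bigoplus_{r\geq0}B(r\theta_0)$ is, with its natural grading, the $p$-th Veronese subalgebra of $\mathbf{k}[x_0,\ldots,x_n]$, and therefore $M_{\theta_0}=\Proj(B_{\theta_0})\cong\Proj(\mathbf{k}[x_0,\ldots,x_n])=\mathds{P}^n$.

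Finally I would match $T_0$ with this isomorphism. For $x=[a_0:\cdots:a_n]\in\mathds{P}^n$, unwinding the quiver of sections of $\{\cO_{\mathds{P}^n},\cO_{\mathds{P}^n}(1)\}$ shows that in the representation $T_0(x)$ the arrow $x_i$ acts, after choosing a local trivialization of $\cO(-1)$ near $x$, by $a_i$ up to one common scalar; that is, $T_0(x)$ is the image of $(a_0,\ldots,a_n)$ under the GIT quotient morphism $\aA^{n+1}\setminus\{0\}\to M_{\theta_0}$. Since this representation is semistable for every $x$, the rational map $T_0$ is everywhere defined, hence a morphism, and under the identification of the previous paragraph it is the identity; in particular it is an isomorphism. (Alternatively, once $T_0$ is known to be a bijective morphism between smooth varieties over a field of characteristic $0$, it is automatically an isomorphism.) I do not expect a genuine obstacle here: this is the prototype computation on which the rest of the paper is modelled, and the only points demanding care are the bookkeeping of the $\PGL$-action and of the character attached to $\theta_0$, together with the matching of trivialization conventions, so that $T_0$ is literally the quotient morphism and not a twist of it.
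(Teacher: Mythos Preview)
Your proposal is correct. The paper's own proof is the one you offer as an alternative at the end: it simply observes (via Lemma~\ref{subrep}) that a representation is unstable iff all $r_{x_i}$ vanish, so $T_0$ is everywhere defined, and then concludes from $T_0$ being a bijective morphism that it is an isomorphism. Your primary route---computing $M_{\theta_0}$ directly as $\Proj$ of the graded ring of semi-invariants, recognising it as the $p$-uple Veronese and hence $\mathds{P}^n$, and then matching $T_0$ with the quotient morphism---is also correct; in fact the paper records exactly this semi-invariant computation in the Remark immediately following the proposition, though it does not use it in the proof itself. Your approach has the advantage of making the identification $M_{\theta_0}\cong\mathds{P}^n$ explicit and avoids an appeal to bijectivity-implies-isomorphism, at the cost of a little more bookkeeping with the $\PGL$-character.
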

\begin{proof}
    By Lemma \ref{subrep}, a representation $R$ of $Q_0$ is unstable if and only if $r_{x_i}=0$ for all $i$. Note for $[a_0:\ldots:a_n]\in \mathds{P}^n$,
    \begin{align*}
        T_0([a_0:\ldots:a_n])=[(a_0,\ldots,a_m)]
    \end{align*}
    Since at least one of $a_i$ is nonzero, $T_0$ is defined on all of $\mathds{P}^n$. Moreover, it is clear that $T_0$ is one-to-one and onto, so it is an isomorphism.
\end{proof}
\begin{rem}
    Note we can identify
    \begin{align*}
        B(k\theta_0)=\mathbf{k}[x_0^{d_0}\ldots x_n^{d_n},d_0+\ldots d_n=kp]
    \end{align*}
    with a subspace vector space of the graded algebra $k[x_0,\ldots,x_n]$. Then 
    \begin{align*}
        \bigoplus_{k\geq0}B(k\theta)
    \end{align*}
    is the algebra that corresponds to the $p$-uple embedding of $\mathds{P}^n$.
\end{rem}
    
    \section{Proof of Theorem \ref{main}}
    We consider the following collection of line bundles on $P^n_m$ of length $3$
    \begin{align*}
        \{\cO_{P^n_m},\cO_{P^n_m}(H-E),\cO_{P^n_m}(H)\}
    \end{align*}
    Using Lemma \ref{slo}, we see this is a strong exceptional collection of line bundles. It is clearly not full due to its small length. Its quiver of sections $Q$ has no relations and is given by
    \[
    \begin{tikzcd}[column sep=huge,row sep=huge]
    1
    \arrow[rr,shift left=2ex,"x_0"]
    \arrow[rr,"\svdots","x_m" swap]
    \arrow[dr,"e"]
    && 3
    \\
    &2\arrow[ur,"x_{m+1}" pos=0.45]
     \arrow[ur,shift right=2ex,"\svdots" {sloped,pos=0.4},"x_n" swap]
    \\
    \end{tikzcd}
    \]
    Let $p,q\in \Z_{>0}$ and $p<q$. We define $\theta=(-p,p-q,q)$ and $\theta_0=(-p,p)$. Let $M_\theta$ be the moduli space of semistable representation of $Q$ with dimension vector $(1,1,1)$.
    \begin{lem}
        $M_\theta$ is in fact the fine moduli space of stable representation of $Q$ with dimension vector $(1,1,1)$. It is a smooth projective variety.
    \end{lem}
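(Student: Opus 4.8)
The plan is to apply Lemma \ref{fine} to the weight $\theta=(-p,p-q,q)$ with $0<p<q$. The vertex set is $Q_{vx}=\{1,2,3\}$, so the proper nonempty subsets are $\{1\},\{2\},\{3\},\{1,2\},\{1,3\},\{2,3\}$. I would simply compute the six partial sums: $\theta_1=-p\neq 0$, $\theta_2=p-q\neq 0$ (since $p<q$), $\theta_3=q\neq 0$, $\theta_1+\theta_2=-q\neq 0$, $\theta_1+\theta_3=q-p\neq 0$, and $\theta_2+\theta_3=p\neq 0$. Since no partial sum over a proper nonempty subset of vertices vanishes, Lemma \ref{fine} gives that every $\theta$-semistable representation with dimension vector $(1,1,1)$ is $\theta$-stable, hence $\mathrm{Rep(Q)}^{SS}_\theta=\mathrm{Rep(Q)}^{S}_\theta$ and $M_\theta$ is a fine moduli space whose closed points are the isomorphism classes of $\theta$-stable representations.

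For smoothness and the fact that $M_\theta$ is a projective variety, I would invoke King's theorem (quoted in the introduction): since the quiver $Q$ of sections here has \emph{no relations} (as noted right before the lemma), the moduli space of stable representations of the relation-free quiver $Q$ is a smooth projective variety. Projectivity also follows from the $\Proj(B_\theta)$ description. To see it is genuinely a variety (irreducible, not just a scheme) rather than merely reduced, one can note that $\mathrm{Rep}(Q)=\mathbf{k}^{(m+1)}\times\mathbf{k}\times\mathbf{k}^{(n-m)}=\aA^{n+2}$ is an affine space since $Q$ is relation-free, hence irreducible, and $\mathrm{Rep(Q)}^{S}_\theta$ is a nonempty open subscheme of it, hence irreducible; the good quotient of an irreducible scheme is irreducible, so $M_\theta$ is irreducible. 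Combined with smoothness and reducedness this makes $M_\theta$ a smooth projective variety.

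I do not anticipate a serious obstacle here: the statement is essentially a bookkeeping application of Lemma \ref{fine} together with the cited results of King, made easy precisely because the quiver is the small relation-free quiver drawn above. The only point requiring a line of care is confirming $\mathrm{Rep(Q)}^{S}_\theta$ is nonempty, which is immediate since, e.g., the representation with all arrows equal to $1$ has no destabilizing subrepresentation (by Lemma \ref{subrep}, a proper subrepresentation would need some arrow out of a $1$-dimensional vertex into a $0$-dimensional one to vanish, which fails here), and its weight-count is forced positive by the computation above.
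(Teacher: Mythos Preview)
Your proposal is correct and follows essentially the same approach as the paper: invoke Lemma~\ref{fine} for the fineness claim and cite King's results for smoothness and projectivity, given that $Q$ has no relations. The paper's proof is just these two invocations without spelling out the partial-sum computation or the irreducibility and nonemptiness checks that you added, so your version is simply a more detailed execution of the same argument.
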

    \begin{proof}
        By Lemma \ref{fine}, $M_\theta$ is the same as the moduli space of stable representations of $Q$ with dimension vector $(1,1,1)$.\\
        The second part of the lemma follows from \cite{King}
    \end{proof}
    
    \begin{thm}\label{map}
      There is a natural surjective morphism
      \begin{equation*}
          F:\mathrm{Rep(Q)}\to \mathrm{Rep(Q_0)}
      \end{equation*}
    \end{thm}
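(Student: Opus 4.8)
The plan is to construct $F$ at the level of coordinate rings and then observe that it is defined by a ring homomorphism into the coordinate ring of $\mathrm{Rep}(Q)$, which being finitely generated and polynomial in this case makes everything explicit. Recall that both quivers have no relations, so $\mathrm{Rep}(Q)$ is simply the affine space with coordinates $x_0,\ldots,x_m$ (the arrows $1\to 3$), $e$ (the arrow $1\to 2$), and $x_{m+1},\ldots,x_n$ (the arrows $2\to 3$), while $\mathrm{Rep}(Q_0)$ is the affine space with coordinates $y_0,\ldots,y_n$ (the arrows $1\to 2$ of the Kronecker quiver). Here I am writing $\mathrm{Rep}(Q)$ and $\mathrm{Rep}(Q_0)$ for the representation schemes parametrizing representations of dimension vector $\mathbf 1$, as set up in the Preliminaries.

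First I would define $F$ on coordinate rings by the $\mathbf k$-algebra homomorphism
\begin{align*}
    F^\sharp:\mathbf k[y_0,\ldots,y_n]&\to \mathbf k[x_0,\ldots,x_m,e,x_{m+1},\ldots,x_n]\\
    y_i&\mapsto x_i \quad (0\le i\le m),\\
    y_j&\mapsto e\, x_j \quad (m+1\le j\le n).
\end{align*}
The geometric meaning is the obvious one: given a representation of $Q$ with maps $(x_0,\ldots,x_m,e,x_{m+1},\ldots,x_n)$, one composes the leg through vertex $2$ to produce the $(n+1)$ scalars $(x_0,\ldots,x_m,e x_{m+1},\ldots,e x_n)$ defining a representation of $Q_0$; this is exactly the statement that the arrows $x_{m+1},\ldots,x_n$ of $Q$ are "irreducible" replacements for the composite paths $e\cdot x_{m+1},\ldots,e\cdot x_n$ that a single Kronecker arrow would carry. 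Since there are no relations on either side, $F^\sharp$ is automatically well defined, and it induces a morphism of affine schemes $F:\mathrm{Rep}(Q)\to \mathrm{Rep}(Q_0)$.

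Next I would check compatibility with the torus actions so that $F$ is meaningful at the moduli level later, though for the present statement only surjectivity is asserted. Concretely, under $(t_1,t_2,t_3)\in(\mathbf k^*)^3$ acting on $\mathrm{Rep}(Q)$ by incidence, $x_i\mapsto t_3 t_1^{-1}x_i$, $e\mapsto t_2 t_1^{-1}e$, $x_j\mapsto t_3 t_2^{-1}x_j$, so the composite $e x_j\mapsto t_3 t_1^{-1}(e x_j)$, matching the action of $(t_1,t_3)$ on the Kronecker coordinate $y_j$; hence $F$ is equivariant along the homomorphism $(\mathbf k^*)^{Q_{vx}}\to(\mathbf k^*)^{(Q_0)_{vx}}$, $(t_1,t_2,t_3)\mapsto(t_1,t_3)$. (This paragraph can be trimmed or deferred depending on how the subsequent sections use $F$.)

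Finally, for surjectivity: given a point $(y_0,\ldots,y_n)\in\mathrm{Rep}(Q_0)$, I would exhibit a preimage by setting $x_i=y_i$ for $0\le i\le m$, $e=1$, and $x_j=y_j$ for $m+1\le j\le n$; then $F$ of this point is $(y_0,\ldots,y_m,1\cdot y_{m+1},\ldots,1\cdot y_n)=(y_0,\ldots,y_n)$. So $F$ is surjective on closed points; since everything in sight is a variety over an algebraically closed field and $F$ comes from the explicit ring map $F^\sharp$, surjectivity on closed points suffices, but one can also note directly that $F^\sharp$ is injective (it is a monomorphism of polynomial rings: $y_i\mapsto x_i$ for $i\le m$ and $y_j\mapsto e x_j$ are algebraically independent since the $x_\bullet$'s together with $e$ are), which gives dominance, and combined with the explicit section above gives surjectivity. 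I do not expect a genuine obstacle here; the only mild subtlety is bookkeeping the index conventions — the quiver of sections of $\mathcal L=\{L_1,L_2,L_3\}$ uses the reversed labeling $L_{(n+1)-j}^\vee\otimes L_{(n+1)-i}$, so one must make sure the arrow $x_j$ ($m+1\le j\le n$) genuinely corresponds to a section realizing the composite through vertex $2$, which follows from Lemma \ref{slo} together with the identification $H^0(\cO_{P^n_m}(H))=H^0(\cO_{P^n_m}(H-E))$-sections times $H^0(\cO_{P^n_m}(E))$-sections at the level of the Cox ring.
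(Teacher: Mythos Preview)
Your proposal is correct and follows essentially the same approach as the paper: define $F$ via the $\mathbf{k}$-algebra map sending the Kronecker generators to $x_i$ for $i\le m$ and to $ex_j$ for $j>m$, then exhibit the section $e=1$, $x_i=y_i$ to prove surjectivity. The equivariance and ring-injectivity remarks you add are not needed for this theorem (the paper defers equivariance to the next proposition), but they do no harm.
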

    \begin{proof}
      We define a $\mathbf{k}$-algebra homomorphism
      $\phi:\mathbf{k}[X_0,\ldots,X_n]\to \mathbf{k}[x_0,\ldots,x_n,e]$ as
      follows:
      \begin{align*}
          \phi(X_i)=
          \begin{cases}
            x_i &\text{if $i\leq m$}\\
            ex_i &\text{if $i>m$}
          \end{cases}
      \end{align*}
      We let $F$ be the corresponding morphism between affine schemes.\\
      
      To see $F$ is surjective, we note for $(a_0,\ldots, a_n)\in Rep(Q)$, we have 
      \begin{align*}
          F(a_0,\ldots,a_n,1)=(a_0,\ldots, a_n)
      \end{align*}
    \end{proof}
    The next proposition shows $F$ respects the $\mathrm{PGL(\mathbf{1})}$- action.
    
    \begin{prop}\label{descent}
      Let $R_1$,$R_2$ be two representations of $Q$ with dimension vector $\mathbf{1}$. Suppose $R_1\sim R_2$, via the element $(g_1,g_2,g_3)$, then $F(R_1)\sim F(R_2)$.
    \end{prop}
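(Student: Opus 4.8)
The plan is to reduce the statement to an explicit scalar computation, exploiting that both $Q$ and $Q_0$ are considered with dimension vector $\mathbf{1}$, so that all the linear maps and all the group actions are simply multiplication by scalars.

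First I would spell out the data of a representation of $Q$ with dimension vector $\mathbf{1}$: it is a tuple of scalars, one for each of the $n+2$ arrows, namely a value $a_i$ on $x_i\colon 1\to 3$ for $0\le i\le m$, a value $b$ on $e\colon 1\to 2$, and a value $a_i$ on $x_i\colon 2\to 3$ for $m<i\le n$. Since each vertex carries a one-dimensional space, an element $(g_1,g_2,g_3)\in\mathrm{PGL}(\mathbf{1})$ acts on the arrow $a$ by multiplication by $g_{t(a)}g_{s(a)}^{-1}$; thus, writing $R_1$ for the tuple $(a_0,\dots,a_m;\,b;\,a_{m+1},\dots,a_n)$, the representation $R_2=(g_1,g_2,g_3)\cdot R_1$ has the arrows $x_0,\dots,x_m$ scaled by $g_3g_1^{-1}$, the arrow $e$ scaled by $g_2g_1^{-1}$, and the arrows $x_{m+1},\dots,x_n$ scaled by $g_3g_2^{-1}$.

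Next I would unwind $F$ on these points via the algebra map $\phi$ of Theorem~\ref{map}: since $F^{\#}=\phi$ sends $X_i\mapsto x_i$ for $i\le m$ and $X_i\mapsto e\,x_i$ for $i>m$, the point $F(R)\in\mathrm{Rep}(Q_0)$ assigns to the $i$-th Kronecker arrow the value $a_i$ if $i\le m$ and $b\,a_i$ if $i>m$. Applying this both to $R_1$ and to the rescaled tuple for $R_2$, one finds that the $i$-th coordinate of $F(R_2)$ is $(g_3g_1^{-1})a_i$ for $i\le m$ and $(g_2g_1^{-1}b)(g_3g_2^{-1}a_i)=(g_3g_1^{-1})(b\,a_i)$ for $i>m$. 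The point of the argument is this cancellation of the middle parameter $g_2$: along a path through vertex $2$ the factor $g_2g_1^{-1}$ from $e$ and the factor $g_3g_2^{-1}$ from $x_i$ multiply to $g_3g_1^{-1}$, the same scalar the direct arrows $x_0,\dots,x_m$ pick up. Hence $F(R_2)$ is obtained from $F(R_1)$ by scaling every arrow of $Q_0$ by the single scalar $g_3g_1^{-1}$.

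Finally I would recognize that a uniform rescaling of all arrows of $Q_0$ by $\lambda$ is precisely the action of $(g_1,g_3)\in\mathrm{PGL}(\mathbf{1})$ on $\mathrm{Rep}(Q_0)$, since the Kronecker arrows run from vertex $1$ to vertex $2$ and are therefore multiplied by $g_3g_1^{-1}$ (equivalently $(g_1,g_3)\sim(1,\lambda)$ modulo the diagonal). Thus $F(R_2)=(g_1,g_3)\cdot F(R_1)$, so $F(R_1)\sim F(R_2)$, as required. I do not anticipate any genuine obstacle: the computation is routine, and the only things needing care are the source/target bookkeeping in the incidence action and the cancellation of $g_2$ — which is exactly the feature that will subsequently let $F$ descend to a morphism $M_\theta\to M_{\theta_0}$ of GIT quotients.
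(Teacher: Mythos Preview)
Your proof is correct and follows exactly the paper's approach: the paper simply asserts that the element $(g_1,g_3)$ provides the equivalence $F(R_1)\sim F(R_2)$, and you have written out the explicit scalar computation verifying this, including the key cancellation of $g_2$ along the path through vertex $2$.
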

    \begin{proof}
      From the construction of $F$, one directly check the element
      \begin{equation*}
          (g_1,g_3)
      \end{equation*}
      provides the equivalence.
      \end{proof}

    Let $U$ consists of representations of $Q$ so that $r_e\neq 0$.
    \begin{lem}\label{stable1}
    Suppose $R\in U$, then $R$ is $\theta$-stable if and only if
    \begin{align*}
              r_{x_i}\neq 0
    \end{align*}
    for at least one $m+1\leq i\leq n$
    \end{lem}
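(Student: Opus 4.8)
The plan is to analyze $\theta$-stability via Lemma~\ref{subrep}, which says a dimension vector $\vec d\in\{0,1\}^3$ gives a subrepresentation of $R$ precisely when no arrow $a$ with $d_{s(a)}=1$, $d_{t(a)}=0$ has $r_a\neq 0$. Since $\theta=(-p,p-q,q)$ with $0<p<q$, the weight function on a subrepresentation with support $P\subset\{1,2,3\}$ is $\theta(P)=\sum_{i\in P}\theta_i$, and stability requires $\theta(P)>0$ for every proper nonzero $P$ that actually supports a subrepresentation. First I would compute these values: $\theta(\{1\})=-p<0$, $\theta(\{2\})=p-q<0$, $\theta(\{3\})=q>0$, $\theta(\{1,2\})=-q<0$, $\theta(\{1,3\})=q-p>0$, $\theta(\{2,3\})=p>0$. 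So the ``dangerous'' supports — those violating stability — are exactly $\{1\}$, $\{2\}$, $\{1,2\}$, since these have negative weight. Thus $R$ is $\theta$-stable if and only if none of the dimension vectors $(1,0,0)$, $(0,1,0)$, $(1,1,0)$ is realized as a subrepresentation of $R$.

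Next I would translate each of these three obstructions into conditions on the $r_a$ using Lemma~\ref{subrep}, under the standing hypothesis $R\in U$, i.e.\ $r_e\neq 0$. The vertex $1$ is the source, with arrows $x_0,\dots,x_m$ to vertex $3$ and $e$ to vertex $2$; vertex $2$ has arrows $x_{m+1},\dots,x_n$ to vertex $3$. For $\vec d=(1,0,0)$: the arrows leaving the support-$1$ set that land outside are $x_0,\dots,x_m$ (to $3$) and $e$ (to $2$); so $(1,0,0)$ is \emph{not} a subrepresentation iff one of $r_{x_0},\dots,r_{x_m},r_e$ is nonzero — but $r_e\neq 0$ by assumption, so this obstruction is automatically absent. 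For $\vec d=(0,1,0)$: arrows leaving $\{2\}$ to the outside are $x_{m+1},\dots,x_n$ (to $3$); so $(0,1,0)$ is not a subrepresentation iff one of $r_{x_{m+1}},\dots,r_{x_n}$ is nonzero. For $\vec d=(1,1,0)$: arrows leaving $\{1,2\}$ to vertex $3$ are $x_0,\dots,x_n$ (all of them); so $(1,1,0)$ is not a subrepresentation iff one of $r_{x_0},\dots,r_{x_n}$ is nonzero. Observe that the condition for $(0,1,0)$ — some $r_{x_i}\neq 0$ with $m+1\le i\le n$ — implies the condition for $(1,1,0)$. Hence, on $U$, stability is equivalent to the single requirement that $r_{x_i}\neq 0$ for at least one $i$ with $m+1\le i\le n$, which is exactly the claim.

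Concretely the write-up is: assume $R\in U$; show $R$ is $\theta$-stable iff none of $(1,0,0),(0,1,0),(1,1,0)$ supports a subrepresentation; note $r_e\neq 0$ kills the $(1,0,0)$ case; then the $(0,1,0)$ and $(1,1,0)$ cases collapse to the stated condition. Each direction is immediate once the weight computation is in place. I do not anticipate a genuine obstacle here; the only thing to be careful about is correctly enumerating, for each candidate support $P$, exactly which arrows exit $P$ — i.e.\ reading off the quiver's incidence data without error — and remembering that by Lemma~\ref{fine} (or directly from the fact that all proper nonzero supports have nonzero weight) there are no strictly semistable representations, so ``semistable'' and ``stable'' coincide and we may speak of stability throughout.
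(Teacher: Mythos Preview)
Your argument is correct and follows essentially the same approach as the paper's proof: both use Lemma~\ref{subrep} to determine which dimension vectors in $\{0,1\}^3$ can occur as subrepresentations given the hypotheses, and then check the sign of $\theta$ on each. The paper organizes the two directions separately (first exhibiting the destabilizing $(0,1,0)$ when all $r_{x_i}=0$ for $m+1\le i\le n$, then listing the only possible subrepresentations $(0,1,1)$ and $(0,0,1)$ in the converse case), while you equivalently precompute all six weights and isolate the three negative ones; the content is the same.
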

    \begin{proof}
        Suppose
        \begin{align*}
              r_{x_i}= 0
        \end{align*}
    for all $m+1\leq i\leq n$, there exist a subrepresentation $S\subset R$ with dimension vector $(0,1,0)$. Then
    \begin{align*}
        \theta(S)=p-q<0
    \end{align*}
    For the other direction, if
    \begin{align*}
              r_{x_i}\neq 0
    \end{align*}
    for at least one $m+1\leq i\leq n$, a nontrivial proper subrepresentation of $S$ can only have one of the following dimension vectors
          \begin{itemize}
              \item $(0,1,1)$
              \item $(0,0,1)$
          \end{itemize}
          one easily check then $R$ is stable.
    \end{proof}
    
    \begin{prop}\label{injective}
      Suppose $R_1,R_2\in U$, and $F(R_1)\sim F(R_2)$ under the action of $(g_1,g_3)$, then $R_1\sim R_2$.
    \end{prop}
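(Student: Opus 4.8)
The plan is to lift the given equivalence on $\mathrm{Rep(Q_0)}$ back up to one on $\mathrm{Rep(Q)}$, using the component of $\mathrm{PGL(\mathbf{1})}$ at vertex $2$ to absorb the single extra scalar $r_e$ that $F$ introduces. First I would record $F$ explicitly on representations: writing a point of $\mathrm{Rep(Q)}$ as the tuple of arrow scalars $(r_{x_0},\dots,r_{x_m},r_e,r_{x_{m+1}},\dots,r_{x_n})$ and a point of $\mathrm{Rep(Q_0)}$ as $(a_0,\dots,a_n)$, the definition of $\phi$ in Theorem \ref{map} gives
\[
F(R)=\bigl(r_{x_0},\dots,r_{x_m},\,r_e\,r_{x_{m+1}},\dots,\,r_e\,r_{x_n}\bigr),
\]
so the arrows out of vertex $1$ are untouched while each arrow out of vertex $2$ is multiplied by $r_e$.

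Next I would unwind the hypothesis. As observed in Proposition \ref{descent}, the two vertices of $Q_0$ correspond to vertices $1$ and $3$ of $Q$, so the assumption that $(g_1,g_3)$ carries $F(R_1)$ to $F(R_2)$ says, writing a superscript $(j)$ to record that a scalar belongs to $R_j$,
\[
g_3\,r_{x_i}^{(1)}\,g_1^{-1}=r_{x_i}^{(2)}\ \ (0\le i\le m),\qquad g_3\,r_e^{(1)}\,r_{x_i}^{(1)}\,g_1^{-1}=r_e^{(2)}\,r_{x_i}^{(2)}\ \ (m+1\le i\le n).
\]
I would then take the lift
\[
(h_1,h_2,h_3):=\Bigl(g_1,\ \tfrac{r_e^{(2)}}{r_e^{(1)}}\,g_1,\ g_3\Bigr),
\]
which is a genuine element of $\mathrm{PGL(\mathbf{1})}$ for $Q$ precisely because $R_1,R_2\in U$: the hypothesis $r_e^{(1)}\neq 0$ makes the middle entry defined and $r_e^{(2)}\neq 0$ makes it nonzero.

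It then remains to verify $(h_1,h_2,h_3)\cdot R_1=R_2$ arrow by arrow. On $x_0,\dots,x_m$ (from $1$ to $3$) the required $h_3\,r_{x_i}^{(1)}\,h_1^{-1}=r_{x_i}^{(2)}$ is the first family of relations above; on $e$ (from $1$ to $2$) it is the identity $h_2\,r_e^{(1)}\,h_1^{-1}=r_e^{(2)}$, built into the choice of $h_2$; and on $x_{m+1},\dots,x_n$ (from $2$ to $3$) the required $h_3\,r_{x_i}^{(1)}\,h_2^{-1}=r_{x_i}^{(2)}$ reduces, after substituting $h_2,h_3$ and clearing denominators, to the second family of relations. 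I do not expect a real obstacle: the argument is a short computation, and the one step that genuinely uses a hypothesis is the construction of $h_2$, where one needs $r_e^{(1)},r_e^{(2)}\neq 0$ — which is exactly why the statement is restricted to $R_1,R_2\in U$.
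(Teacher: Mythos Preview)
Your proof is correct and follows exactly the paper's strategy: exhibit an explicit element of $\mathrm{PGL}(\mathbf{1})$ for $Q$ lifting $(g_1,g_3)$ and verify arrow by arrow that it carries $R_1$ to $R_2$. Your lift $(g_1,\tfrac{e_2}{e_1}g_1,g_3)$ is, after clearing denominators by the overall scalar $e_1$, the triple $(g_1e_1,\,g_1e_2,\,g_3e_1)$, which is what the paper's one-line check intends (the printed triple in the paper has a typographical slip, but the method is identical).
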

    \begin{proof}
      Let $e_i$ denote the value of $e$ in $R_i$ for $i=1,2$, then $e_1e_2\neq 0$. Again by the construction of $F$, one directly checks that 
      \begin{equation*}
          \Big(g_1e_2,,g_3e_1,g_3e_2\Big)
      \end{equation*}
      provides the equivalence.
    \end{proof}
    
    \begin{lem}\label{stable2}
          Suppose $R\in \mathbf{V}(e)$ , then $R$ is $\theta$-stable if and only if 
          \begin{align*}
              r_{x_i}\neq 0
          \end{align*}
          for at least one $0\leq i\leq m$ and 
          at least one $m+1\leq i\leq n$.
    \end{lem}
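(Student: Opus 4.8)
The plan is to run the same kind of case analysis as in the proof of Lemma~\ref{stable1}, but now under the hypothesis $r_e=0$. First I would record that, since $0<p<q$, no proper nonempty subset of the vertex set $\{1,2,3\}$ has vanishing $\theta$-sum: the relevant partial sums are $-p$, $p-q$, $q$, $-q$, $q-p$ and $p$, all nonzero. Hence Lemma~\ref{fine} applies, a representation is $\theta$-semistable precisely when it is $\theta$-stable, and it suffices to determine when every proper nonzero subrepresentation $R'$ of $R$ satisfies $\theta(R')>0$.

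Next I would list, using Lemma~\ref{subrep}, the possible dimension vectors of a proper nonzero subrepresentation, namely $(1,0,0)$, $(0,1,0)$, $(0,0,1)$, $(1,1,0)$, $(1,0,1)$, $(0,1,1)$, and evaluate $\theta$ on each: respectively $-p$, $p-q$, $q$, $-q$, $q-p$, $p$. Only $(1,0,0)$, $(0,1,0)$ and $(1,1,0)$ have negative weight, so $R$ is $\theta$-stable if and only if none of these three is the dimension vector of a subrepresentation of $R$. Now I apply Lemma~\ref{subrep} together with $r_e=0$: the vector $(1,0,0)$ underlies a subrepresentation iff $r_{x_i}=0$ for all $0\le i\le m$ (the arrow $e$ imposes no further condition, since $r_e=0$ already); $(0,1,0)$ underlies a subrepresentation iff $r_{x_i}=0$ for all $m+1\le i\le n$; and $(1,1,0)$ underlies a subrepresentation iff $r_{x_i}=0$ for all $0\le i\le n$, a condition already ruled out once the previous two are.

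Combining these, $R\in\mathbf{V}(e)$ is $\theta$-stable iff neither $(1,0,0)$ nor $(0,1,0)$ underlies a subrepresentation, that is, iff $r_{x_i}\neq 0$ for at least one $0\le i\le m$ and $r_{x_i}\neq 0$ for at least one $m+1\le i\le n$, which is exactly the assertion. I do not expect a genuine obstacle here; the only point requiring care is the translation, through Lemma~\ref{subrep}, between each candidate dimension vector and the corresponding vanishing conditions on the $r_a$ — in particular the observation that inside $\mathbf{V}(e)$ the arrow $e$ never obstructs a subrepresentation, which is what makes the list of destabilizing subrepresentations differ from the one encountered in Lemma~\ref{stable1}.
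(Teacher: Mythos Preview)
Your proposal is correct and follows essentially the same approach as the paper: both arguments identify, via Lemma~\ref{subrep} and the hypothesis $r_e=0$, that the destabilizing subrepresentations are precisely those of dimension vector $(1,0,0)$ or $(0,1,0)$, and that under the stated nonvanishing conditions the only remaining proper nonzero subrepresentations have dimension vectors among $(0,0,1)$, $(1,0,1)$, $(0,1,1)$, all of positive $\theta$-value. Your treatment is slightly more systematic (explicitly listing all six candidate dimension vectors and disposing of $(1,1,0)$), but the content is the same.
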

    \begin{proof}
        Suppose $r_{x_i}=0$ for all $0\leq i\leq m$, there exist a subrepresentation $S\subset R$ with dimension vector $(1,0,0)$, then 
        \begin{align*}
            \theta(S)=-p<0
        \end{align*}
        Suppose $r_{x_i}=0$ for all $m+1\leq i\leq n$, there exist a subrepresentation $S\subset R$ with dimension vector $(0,1,0)$, then 
        \begin{align*}
            \theta(S)=p-q<0
        \end{align*}
        For the other direction, suppose \begin{align*}
              r_{x_i}\neq 0
          \end{align*}
          for at least one $0\leq i\leq m$ and 
          at least one $m+1\leq i\leq n$, a nontrivial proper subrepresentation of $S$ can only have one of the following dimension vectors
          \begin{itemize}
              \item $(0,1,1)$
              \item $(1,0,1)$
              \item $(0,0,1)$
          \end{itemize}
          one easily check then $R$ is stable.
    \end{proof}
    
    \begin{cor}
        The tautological rational map $T$ is a morphism.
    \end{cor}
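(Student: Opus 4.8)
The plan is to show that the tautological map $T_0\colon P^n_m\to\mathrm{Rep}(Q)$ (well defined as a morphism, since it is given by evaluating the sections $x_0,\dots,x_n,e$ at a point, in local trivializations of the $L_i^\vee$) has image contained in the $\theta$-stable locus. Since $T$ is by construction the composite of $T_0$ with the GIT quotient $\mathrm{Rep}(Q)^{SS}_\theta\to M_\theta$, and since by Lemma~\ref{fine} one has $\mathrm{Rep}(Q)^{SS}_\theta=\mathrm{Rep}(Q)^{S}_\theta$ for $\theta=(-p,p-q,q)$ (one checks that no proper nonempty subset of $\{1,2,3\}$ has vanishing $\theta$-sum, using $0<p<q$), it suffices to prove: for every closed point $x\in P^n_m$, the representation $R_x=\bigoplus_{i=1}^3(L_i^\vee)_x$ is $\theta$-stable.

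First I would record what the arrow maps of $R_x$ are. The arrows $x_0,\dots,x_m\colon 1\to 3$ are the sections $X_0,\dots,X_m$ of $\cO_{P^n_m}(H)$, the arrow $e\colon 1\to 2$ is the canonical section cutting out $E$, and the arrows $x_{m+1},\dots,x_n\colon 2\to 3$ form a basis of $H^0(\cO_{P^n_m}(H-E))$. Restricting to $E\cong\mathds{P}^m\times\mathds{P}^{n-m-1}$, one has as in Lemma~\ref{slo} that $\cO_{P^n_m}(H)|_E=\cO_E(1,0)$ and $\cO_{P^n_m}(H-E)|_E=\cO_E(0,1)$; hence $x_0,\dots,x_m$ restrict to the homogeneous coordinates on the $\mathds{P}^m$-factor (which is $\mathds{L}$ via $\pi$), while $x_{m+1},\dots,x_n$ restrict to the homogeneous coordinates on the $\mathds{P}^{n-m-1}$-factor. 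In particular, along $E$ the sections $x_0,\dots,x_m$ have no common zero, and neither do $x_{m+1},\dots,x_n$.

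Now I would split into the two cases matching Lemmas~\ref{stable1} and~\ref{stable2}. If $x\notin E$, then the section $e$ is a unit near $x$, so $R_x\in U$; moreover $\pi(x)\notin\mathds{L}$, so some coordinate $X_j(\pi(x))$ with $m+1\le j\le n$ is nonzero, whence $r_{x_j}(x)\neq 0$ for that $j$. By Lemma~\ref{stable1}, $R_x$ is $\theta$-stable. If $x\in E$, then $r_e(x)=0$, so $R_x\in\mathbf{V}(e)$; by the previous paragraph at least one $r_{x_i}(x)$ with $0\le i\le m$ and at least one $r_{x_j}(x)$ with $m+1\le j\le n$ are nonzero. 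By Lemma~\ref{stable2}, $R_x$ is $\theta$-stable. Since every point of $P^n_m$ falls into one of these two cases, $T_0$ factors through $\mathrm{Rep}(Q)^{S}_\theta$ and therefore $T$ is a morphism.

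The only content beyond bookkeeping with Lemmas~\ref{stable1} and~\ref{stable2} is the geometric input in the second paragraph: identifying $\cO_{P^n_m}(H)|_E$ and $\cO_{P^n_m}(H-E)|_E$ and concluding that neither family of sections $\{x_i\}_{i\le m}$ nor $\{x_j\}_{j>m}$ has a base point on $E$ (off $E$ this is the usual base-point-freeness of $|H|$ away from $\mathds{L}$). This is exactly the restriction computation already carried out in the proof of Lemma~\ref{slo}, so once it is invoked, stability of $R_x$ at every point is automatic and I anticipate no serious obstacle.
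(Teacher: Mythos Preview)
Your proposal is correct and follows essentially the same route as the paper: split into the cases $x\in E$ and $x\notin E$, and invoke Lemmas~\ref{stable2} and~\ref{stable1} respectively to show the resulting representation is $\theta$-stable. The paper's proof simply writes down $T(x)$ in coordinates in each case, whereas you supply the underlying justification (the restriction computations $\cO_{P^n_m}(H)|_E\cong\cO_E(1,0)$ and $\cO_{P^n_m}(H-E)|_E\cong\cO_E(0,1)$) for why those coordinate descriptions are correct; this extra care is welcome but does not change the structure of the argument.
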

    \begin{proof}
        Let $x\in P^n_m$. If $x\in E\cong\mathds{P}^m\times\mathds{P}^{n-m-1}$, suppose $x=([a_0,\ldots,a_m],[b_{m+1},\ldots,b_n])$, then $T(x)=[(a_0,\ldots,a_m,b_{m+1},\ldots,b_n,0)]$. Since at least one of $a_i$ and at least one of $b_i$ is nonzero, by Lemma, $T(x)$ is stable.\\
        If $x\in P^n_m\backslash E$, then we can write $x=\pi^{-1}([a_0,\ldots,a_n])$, where $a_i\neq 0$ for at least one $m+1\leq i\leq n$ since $x\notin E$. Then $T(x)=[(a_0,\ldots,a_n,1)]$. By Lemma, $T(x)$ is stable.
    \end{proof}
    
    \begin{prop}\label{commute}
        The natural morphism
      \begin{equation*}
           F:\mathrm{Rep(Q)}\to \mathrm{Rep(Q_0)}
      \end{equation*}
      descends to a projective morphism
      \begin{equation*}
          f:M_{\theta}\to M_{\theta_0}
      \end{equation*}
      which fits into a commutative diagram
      \[ \begin{tikzcd}
      P^n_m \arrow[r,"T"] \arrow{d}{\pi} & M_\theta \arrow{d}{f} \\%
      \mathds{P}^n \arrow{r}{T_0}& M_{\theta_0}
      \end{tikzcd}
      \]
    \end{prop}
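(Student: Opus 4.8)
The plan is to produce $f$ from the universal property of the GIT quotient; the one substantive point is that $F$ carries $\theta$-semistable representations of $Q$ to $\theta_0$-semistable representations of $Q_0$, and once this is known the descent of $F$ and the commutativity of the square are formal. To check preservation of semistability it suffices to argue on closed points, since both $\mathrm{Rep(Q)}$ and $\mathrm{Rep(Q_0)}$ are affine spaces (the quivers have no relations). So let $R$ be a $\theta$-semistable representation of $Q$; by Lemma \ref{fine} it is $\theta$-stable, and it lies either in $U$ or in $\mathbf{V}(e)$. If $R\in U$, Lemma \ref{stable1} gives an index $m+1\le i\le n$ with $r_{x_i}\neq 0$; since also $r_e\neq 0$, the arrow $X_i$ of $F(R)$ takes the value $r_e r_{x_i}\neq 0$. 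If $R\in\mathbf{V}(e)$, Lemma \ref{stable2} gives an index $0\le i\le m$ with $r_{x_i}\neq 0$, and that is exactly the value of the arrow $X_i$ of $F(R)$. In either case $F(R)$ is a representation of $Q_0$ not all of whose arrows vanish, hence has no destabilizing subrepresentation of dimension vector $(1,0)$ (cf. Lemma \ref{subrep}), so it is $\theta_0$-semistable. Thus $F$ restricts to a morphism $F^{ss}\colon\mathrm{Rep(Q)}^{SS}_\theta\to\mathrm{Rep(Q_0)}^{SS}_{\theta_0}$.

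By Proposition \ref{descent}, $F$ is equivariant for the group homomorphism $\mathrm{PGL}(\mathbf{1})\to\mathrm{PGL}(\mathbf{1})$ (for $Q$, resp. $Q_0$) sending $(g_1,g_2,g_3)$ to $(g_1,g_3)$. Hence the composite of $F^{ss}$ with the quotient map $\mathrm{Rep(Q_0)}^{SS}_{\theta_0}\to M_{\theta_0}$ is constant on $\mathrm{PGL}(\mathbf{1})$-orbits in $\mathrm{Rep(Q)}^{SS}_\theta$. Since $\mathrm{PGL}(\mathbf{1})$ is a torus, $\mathrm{Rep(Q)}^{SS}_\theta\to M_\theta$ is a categorical (indeed geometric) quotient, so this composite factors uniquely through a morphism $f\colon M_\theta\to M_{\theta_0}$, which is the desired descent. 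It is projective: $M_\theta$ is projective (it is $\mathrm{Proj}$ of a graded ring, and was shown above to be a smooth projective variety) and $M_{\theta_0}\cong\mathds{P}^n$ is projective, so any morphism between them is projective.

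It remains to verify $f\circ T=T_0\circ\pi$. Both sides are morphisms from $P^n_m$ to the separated scheme $M_{\theta_0}$, so it is enough to compare values on closed points, which we read off from the proof of the Corollary (describing $T$), from the isomorphism $T_0\colon\mathds{P}^n\to M_{\theta_0}$ of the Kronecker-quiver subsection, and from $\phi(X_i)=x_i$ for $i\le m$, $\phi(X_i)=ex_i$ for $i>m$. If $x=\pi^{-1}([a_0:\cdots:a_n])\notin E$, then $T(x)=[(a_0,\ldots,a_n,1)]$, so $f(T(x))=[F(a_0,\ldots,a_n,1)]=[(a_0,\ldots,a_n)]=T_0(\pi(x))$. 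If $x=([a_0:\cdots:a_m],[b_{m+1}:\cdots:b_n])\in E$, then $T(x)=[(a_0,\ldots,a_m,b_{m+1},\ldots,b_n,0)]$, and applying $\phi$ kills the coordinates indexed by $i>m$, so $f(T(x))=[(a_0,\ldots,a_m,0,\ldots,0)]=T_0([a_0:\cdots:a_m:0:\cdots:0])=T_0(\pi(x))$.

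The only real obstacle is the semistability statement of the first paragraph, and within it the case $R\in\mathbf{V}(e)$: there $F$ annihilates all the arrows $x_{m+1},\ldots,x_n$ (each gets multiplied by $e=0$), so one needs to know that $\theta$-stability of such an $R$ already forces one of the surviving arrows $x_0,\ldots,x_m$ to be nonzero — which is exactly the extra condition built into Lemma \ref{stable2}. Everything after that is bookkeeping with universal properties of categorical quotients and explicit coordinates.
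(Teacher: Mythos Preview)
Your proof is correct and follows essentially the same approach as the paper. The paper's own argument is much terser—it simply cites Lemmas \ref{stable1} and \ref{stable2} to assert that $F$ descends, then verifies commutativity by the identical pointwise computation you give—whereas you spell out the mechanism (preservation of semistability, equivariance via Proposition \ref{descent}, and the universal property of the categorical quotient).
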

    
    \begin{proof}
        By Lemma \ref{stable1} \ref{stable2}, $F$ descents to 
        \begin{align*}
            f:M_{\theta}\to M_{\theta_0}
        \end{align*}
        $f$ is projective since it is a morphism between projective schemes.\\
        
        Let $x\in P^n_m$. If $x\in E\cong\mathds{P}^m\times\mathds{P}^{n-m-1}$, suppose $x=([a_0,\ldots,a_m],[b_{m+1},\ldots,b_n])$, then $T(x)=[(a_0,\ldots,a_m,b_{m+1},\ldots,b_n,0)]$. Hence
        \begin{align*}
            F\circ T(x)=[(a_0,\ldots,a_m,0,\ldots,0)]
        \end{align*}
        Now $\pi(x)=[a_0,\ldots,a_m,0\ldots,0]$ \begin{align*}
            T_0\circ\pi(x)&=[(a_0,\ldots,a_m,0\ldots,0)]\\
            &=F\circ T(x)
        \end{align*}
        If $x\in P^n_m\backslash E$, then we can write $x=\pi^{-1}([a_0,\ldots,a_n]$. Then $T(x)=[(a_0,\ldots,a_n,1)]$ and
        \begin{align*}
            F\circ T(x)&=[(a_0,\ldots,a_n)]\\
            &=T_0\circ \pi(x)
        \end{align*}
    \end{proof}
    Let $C$ denote the closed subscheme of  $M_\theta$ containing stable orbits of representations with $r_e=0$, i.e. 
    \begin{align*}
        C=\mathbf{V}(e)^{S}//\mathrm{PGL(\mathbf{1})}
    \end{align*}
    where $\mathbf{V}(e)^{S}$ is the open subscheme of $\mathbf{V}(e)$ consisting of stable representations.
    \begin{prop}\label{fiber}
        We have
        $C\cong \mathds{P}^m\times\mathds{P}^{n-m-1}\cong\mathds{L}\times\mathds{P}^{n-m-1}$ and $f|C$ is the projection to $\mathds{L}$
    \end{prop}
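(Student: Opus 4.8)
\emph{Proof proposal.} The strategy is to compute $C$ as an explicit geometric quotient using Lemma~\ref{stable2}, and then to read off $f|_C$ directly from the formula defining $F$. Since $Q$ has no relations, $\mathrm{Rep(Q)}=\Spec\mathbf{k}[x_0,\ldots,x_n,e]$ and $\mathbf{V}(e)=\Spec\mathbf{k}[x_0,\ldots,x_n]\cong\mathds{A}^{n+1}$, a point $(a_0,\ldots,a_n)$ recording the values of the arrows $x_0,\ldots,x_m\colon 1\to 3$ and $x_{m+1},\ldots,x_n\colon 2\to 3$ while $r_e=0$. By Lemma~\ref{stable2},
\[
\mathbf{V}(e)^{S}=\{(a_0,\ldots,a_n):(a_0,\ldots,a_m)\neq 0\ \text{and}\ (a_{m+1},\ldots,a_n)\neq 0\}=(\mathds{A}^{m+1}\setminus 0)\times(\mathds{A}^{n-m}\setminus 0).
\]
Using the diagonal $\mathbf{k}^*$ to normalise $g_3=1$, the group $\mathrm{PGL(\mathbf{1})}$ acts here through $(\mathbf{k}^*)^2$ by $(g_1,g_2)\cdot(a_0,\ldots,a_n)=(g_1^{-1}a_0,\ldots,g_1^{-1}a_m,g_2^{-1}a_{m+1},\ldots,g_2^{-1}a_n)$, since the $e$-coordinate is fixed at $0$; this action is free on $\mathbf{V}(e)^{S}$ (some $a_i\neq0$ with $i\le m$ forces $g_1=1$, likewise $g_2=1$). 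As every point of $\mathbf{V}(e)^{S}$ is $\theta$-stable, $C=\mathbf{V}(e)^{S}/\!/\mathrm{PGL(\mathbf{1})}$ is a geometric quotient, hence by uniqueness it is the standard quotient $(\mathds{A}^{m+1}\setminus 0)\times(\mathds{A}^{n-m}\setminus 0)\to\mathds{P}^m\times\mathds{P}^{n-m-1}$, the first factor carrying homogeneous coordinates $[a_0:\cdots:a_m]$. Identifying this $\mathds{P}^m$ with $\mathds{L}=\{X_{m+1}=\cdots=X_n=0\}\subset\mathds{P}^n$ via $[a_0:\cdots:a_m]\mapsto[a_0:\cdots:a_m:0:\cdots:0]$ then gives $C\cong\mathds{L}\times\mathds{P}^{n-m-1}$.

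For the second claim I would use that $f$ is induced by $F$, and $F$ comes from $\phi(X_i)=x_i$ for $i\le m$ and $\phi(X_i)=ex_i$ for $i>m$, so that $F(a_0,\ldots,a_n,0)=(a_0,\ldots,a_m,0,\ldots,0)$. Under the isomorphism $T_0\colon\mathds{P}^n\xrightarrow{\ \sim\ }M_{\theta_0}$, which sends $[b_0:\cdots:b_n]$ to $[(b_0,\ldots,b_n)]$, this represents the point $[a_0:\cdots:a_m:0:\cdots:0]\in\mathds{L}\subset\mathds{P}^n$. Hence $f|_C\colon\mathds{L}\times\mathds{P}^{n-m-1}\to M_{\theta_0}=\mathds{P}^n$ sends $([a_0:\cdots:a_m],[a_{m+1}:\cdots:a_n])$ to $[a_0:\cdots:a_m:0:\cdots:0]$, i.e.\ it factors through $\mathds{L}$ and is the projection onto the first factor, as claimed.

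The point that needs care is the scheme-theoretic (not merely set-theoretic) identification of $\mathbf{V}(e)^{S}/\!/\mathrm{PGL(\mathbf{1})}$ with $\mathds{P}^m\times\mathds{P}^{n-m-1}$. I would settle it by the freeness of the $(\mathbf{k}^*)^2$-action together with the uniqueness of geometric quotients, or alternatively by constructing the tautological family of $\theta$-stable representations with $r_e=0$ over $\mathds{P}^m\times\mathds{P}^{n-m-1}$ and invoking the fine-moduli property (Lemma~\ref{fine}) to produce a morphism $\mathds{P}^m\times\mathds{P}^{n-m-1}\to C$ inverse to the quotient map. Everything else is substitution into the definitions of $F$ and $T_0$.
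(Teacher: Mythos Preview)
Your proof is correct and follows essentially the same route as the paper: identify $\mathbf{V}(e)^{S}$ via Lemma~\ref{stable2} as $(\mathds{A}^{m+1}\setminus 0)\times(\mathds{A}^{n-m}\setminus 0)$, recognise the $\PGL$-action as independent scalings on the two factors so that the quotient is $\mathds{P}^m\times\mathds{P}^{n-m-1}$, and read off $f|_C$ directly from the formula defining $F$. Your write-up is in fact more careful than the paper's---you normalise $g_3=1$ (which makes the two scalings visibly independent), note freeness and the geometric-quotient identification, and spell out the map to $\mathds{L}$---whereas the paper simply asserts the action splits and leaves the second claim as ``follows directly from the definition of $F$''.
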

    \begin{proof}
        By Lemma \ref{stable2},
        \begin{align*}
            \mathbf{V}(e)^S=(\mathds{A}^{m+1}\backslash 0)\times (\mathds{A}^{n-m}\backslash 0)
        \end{align*}
        For $(1,g_2,g_3)\in \mathrm{PGL(\mathbf{1})}$, it acts on $\mathbf{V}(e)^S$ by letting $g_2$ acts on the first component via scalar multiplication and $g_3$ acts on the second in the same way. Thus
        \begin{align*}
            C&=\mathbf{V}(e)^S//\mathrm{PGL(\mathbf{1})}\\
            &=(\mathds{A}^{m+1}\backslash 0)//k^*\times (\mathds{A}^{n-m}\backslash 0)//k^*\\            &=\mathds{P}^m\times\mathds{P}^{n-m-1}
        \end{align*}
        The fact that $f|C$ is the projection to the first component follows directly from the definition of $F$.
    \end{proof}
    
    \begin{cor}\label{open}
         $f$ induces an isomorphism between $M_\theta\backslash C$ and $M_{\theta_0}\backslash T_0(\mathds{L})$, thus $f$ is a birational morphism.
    \end{cor}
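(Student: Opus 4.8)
The plan is to identify $M_\theta\setminus C$ and $M_{\theta_0}\setminus T_0(\mathds{L})$ as explicit GIT quotients, show that $f$ restricts to a bijective morphism between them, and then promote this bijection to an isomorphism using that we work in characteristic $0$. First I would record the decomposition of the stable locus: since $U\cap\mathbf{V}(e)=\emptyset$ and the $\PGL$-orbit of any representation with $r_e\neq 0$ still has $r_e\neq 0$, we get $\mathrm{Rep}(Q)^{S}_{\theta}=U^{S}\sqcup\mathbf{V}(e)^{S}$, where $U^{S}$ denotes the stable representations lying in $U$; taking quotients and using that $C$ is closed yields $M_\theta\setminus C=U^{S}/\!/\PGL$. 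On the Kronecker side, $T_0([a_0:\cdots:a_n])=[(a_0,\ldots,a_n)]$ together with Lemma~\ref{subrep} shows that $T_0(\mathds{L})\subset M_{\theta_0}\cong\mathds{P}^n$ is exactly the locus $\{r_{x_i}=0\text{ for all }m+1\le i\le n\}$, so $M_{\theta_0}\setminus T_0(\mathds{L})$ is the semistable locus with $r_{x_i}\neq 0$ for some $i>m$, i.e. $\mathds{P}^n\setminus\mathds{L}$.

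Next I would check that $f$ actually restricts to a morphism $M_\theta\setminus C\to M_{\theta_0}\setminus T_0(\mathds{L})$. Because $\phi(X_i)=e\,x_i$ for $i>m$, for $R\in U$ the representation $F(R)$ has some $X_i\neq 0$ with $i>m$ precisely when $R$ does, and by Lemma~\ref{stable1} this is automatic as soon as $R\in U^{S}$; conversely, if $[R]\in M_\theta$ satisfies $f([R])\in T_0(\mathds{L})$ then $e\,x_i=0$ for all $i>m$, and stability forces $e=0$, i.e. $[R]\in C$ — so in fact $f^{-1}(T_0(\mathds{L}))=C$. The restricted morphism is surjective: any semistable representation $(b_0,\ldots,b_n)$ of $Q_0$ with some $b_i\neq 0$, $i>m$, is $F$ of the representation $(b_0,\ldots,b_m,1,b_{m+1},\ldots,b_n)$, which is stable by Lemma~\ref{stable1} and lies in $U$. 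And it is injective by Proposition~\ref{injective}.

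It remains to see that a bijective morphism here is an isomorphism. The cleanest formulation: $M_{\theta_0}\setminus T_0(\mathds{L})$ is open in $\mathds{P}^n$, hence a normal (indeed smooth) variety, and $M_\theta\setminus C$ is open in the smooth variety $M_\theta$; a bijective morphism of varieties onto a normal variety in characteristic $0$ is an isomorphism (it is separable and generically one-to-one, hence birational, and then Zariski's main theorem finishes it). Equivalently one can argue concretely: every $\PGL$-orbit in $U$ has a unique representative with $r_e=1$, so $U^{S}/\!/\PGL\cong\big(\mathds{A}^{m+1}\times(\mathds{A}^{n-m}\setminus 0)\big)/\!/\mathbf{k}^{*}$ with $\mathbf{k}^{*}$ scaling all coordinates, which is $\mathds{P}^n\setminus\mathds{L}$, and under these identifications $f$ is the identity. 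Finally, since $\dim C=n-1<n=\dim M_\theta$ the open set $M_\theta\setminus C$ is dense, and $M_{\theta_0}\setminus T_0(\mathds{L})$ is dense in $M_{\theta_0}$, so $f$ is an isomorphism over a dense open and hence a birational morphism. I expect the main obstacle to be exactly this last upgrade step — certifying that the bijective morphism is an honest isomorphism (equivalently, that restriction to the slice $\{r_e=1\}$ computes the GIT quotient), rather than a mere bijection on closed points.
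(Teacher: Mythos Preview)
Your proof is correct and follows essentially the same route as the paper: injectivity of $f|_{M_\theta\setminus C}$ comes from Proposition~\ref{injective}, surjectivity onto $M_{\theta_0}\setminus T_0(\mathds{L})$ is checked (the paper extracts it from Propositions~\ref{commute} and~\ref{fiber} rather than by your explicit preimage), and the bijection is upgraded to an isomorphism via Zariski's Main Theorem using smoothness of both sides. Your extra slice argument normalizing $r_e=1$ is a pleasant alternative the paper does not give, but the overall strategy is the same.
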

    \begin{proof}
        By Lemma \ref{injective}, $f|_{M_\theta\backslash C}$ is injective. By Proposition \ref{commute}, $f$ is surjective. By Proposition \ref{fiber}, $f|_{M_\theta\backslash C}:M_\theta\backslash C\to M_{\theta_0}\backslash T_0(\mathds{L})$ is also surjective. Since both $M_\theta\backslash C$ and $M_{\theta_0}\backslash T_0(\mathds{L})$ are smooth varieties, $f|_{M_\theta\backslash C}$ is an isomorphism by Zariski Main Theorem.
        
    \end{proof}

    \begin{thm}\label{final}
        $T:P^n_m\to M_\theta$ is an isomorphism and $f:M_\theta\to M_{\theta_0}$ is the blow down along $T_0(\mathds{L})$.
    \end{thm}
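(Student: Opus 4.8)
The plan is to promote the set-theoretic picture assembled in Propositions~\ref{commute}--\ref{fiber} and Corollary~\ref{open} to an honest isomorphism, and then transport the blow-down structure of $\pi$ across it.

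First I would prove that $T$ is bijective, splitting $P^n_m$ into $P^n_m\setminus E$ and $E$. On the complement of $E$, the contraction $\pi$ restricts to an isomorphism onto $\mathds{P}^n\setminus\mathds{L}$, the tautological map $T_0$ is an isomorphism carrying $\mathds{P}^n\setminus\mathds{L}$ onto $M_{\theta_0}\setminus T_0(\mathds{L})$, and by Corollary~\ref{open} the map $f$ restricts to an isomorphism $M_\theta\setminus C\xrightarrow{\sim}M_{\theta_0}\setminus T_0(\mathds{L})$. Since $f\circ T=T_0\circ\pi$ by Proposition~\ref{commute}, and since the formula $T(x)=[(a_0,\ldots,a_n,1)]$ for $x\notin E$ shows $T(P^n_m\setminus E)\subset U$ lands in $M_\theta\setminus C$, we get $T|_{P^n_m\setminus E}=(f|_{M_\theta\setminus C})^{-1}\circ T_0\circ\pi$, an isomorphism onto $M_\theta\setminus C$. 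On the exceptional divisor, the explicit formula $T([a_0:\cdots:a_m],[b_{m+1}:\cdots:b_n])=[(a_0,\ldots,a_m,b_{m+1},\ldots,b_n,0)]$ together with the description of $C$ in Proposition~\ref{fiber} as $(\mathds{A}^{m+1}\setminus 0)/\!/\mathbf{k}^*\times(\mathds{A}^{n-m}\setminus 0)/\!/\mathbf{k}^*$ identifies $T|_E$ with the identity of $\mathds{P}^m\times\mathds{P}^{n-m-1}$, hence an isomorphism onto $C$. Thus $T$ is a bijective morphism (it is a morphism by the Corollary following Lemma~\ref{stable2}).

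Next I would conclude that $T$ is an isomorphism. Both $P^n_m$ and $M_\theta$ are smooth projective varieties, so $M_\theta$ is normal; $T$ is proper (a morphism of projective $\mathbf{k}$-varieties) and quasi-finite (being bijective on closed points), hence finite, and it is birational by the first paragraph since $P^n_m\setminus E$ is dense. A finite birational morphism onto a normal variety is an isomorphism (equivalently, invoke Zariski's Main Theorem exactly as in Corollary~\ref{open}). This establishes the first assertion.

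Finally, for the second assertion I would transport $\pi$ across $T$ and $T_0$: the commutative square now reads $f=T_0\circ\pi\circ T^{-1}$ with $T$ and $T_0$ isomorphisms, so $f$ coincides, under these identifications, with the blow-up contraction $\pi\colon P^n_m\to\mathds{P}^n$. Under $T$ the exceptional divisor $E$ maps isomorphically onto $C$, under $T_0$ the center $\mathds{L}$ maps isomorphically onto $T_0(\mathds{L})$, and by Proposition~\ref{fiber} the restriction $f|_C$ is the projection of $C\cong\mathds{P}^m\times\mathds{P}^{n-m-1}$ onto $\mathds{P}^m\cong\mathds{L}$, matching $\pi|_E$. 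Hence $f$ is the blow-down of $M_\theta$ along $T_0(\mathds{L})$.

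I expect the only genuine subtlety to be the middle step: deducing a \emph{global} isomorphism from a bijective morphism that is known to be an isomorphism separately on $P^n_m\setminus E$ and on $E$ is not legitimate by gluing alone, so one really does need normality of $M_\theta$ (i.e.\ smoothness, from \cite{King} as recorded in Section~3) together with properness. Everything else is bookkeeping with the coordinate formulas already in hand.
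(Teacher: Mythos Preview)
Your argument is correct and is actually more self-contained than the paper's. The paper proceeds in the opposite order: it first shows that $f$ is the blow-up of $M_{\theta_0}\cong\mathds{P}^n$ along $T_0(\mathds{L})$ by invoking the weak factorization theorem (arguing that the fiber structure $C\cong\mathds{L}\times\mathds{P}^{n-m-1}$ forces $f$ to be a single blow-up at $\mathds{L}$), and then deduces that $T$ is an isomorphism from the universal property of blow-ups. By contrast, you prove directly that $T$ is a bijective morphism---an isomorphism on $P^n_m\setminus E$ via the commutative square and on $E$ via the explicit coordinate description of $C$---and then upgrade bijectivity to an isomorphism using smoothness of $M_\theta$ and Zariski's Main Theorem; the blow-down description of $f$ then falls out by transporting $\pi$ across $T$ and $T_0$. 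Your route avoids the weak factorization theorem entirely, which is a substantial simplification (and the paper's passage from ``$f$ factors as blow-ups and blow-downs'' to ``$f$ is a single blow-up at $\mathds{L}$'' is itself somewhat terse). Your own remark that the delicate point is the global upgrade from piecewise isomorphism to isomorphism is exactly right, and your handling of it via properness and normality is sound.
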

    \begin{proof}
        By Corollary \ref{open}, $f:M_{\theta}\to M_{\theta_0}\cong \mathds{P}^n$ is a proper birational morphism between smooth projective varieties. So by weak factorization theorem, $f$ can be factored into a sequence of blow ups with centers disjoint from $\mathds{P}^n\backslash\mathds{L}$. But since $C=\mathds{L}\times \mathds{P}^{n-m-1}$. $f$ is the blow up with center $\mathds{L}$.\\
        Now the fact that $T$ is an isomorphism follows immediately from universal property of blow ups.
    \end{proof}
    
\begin{rem}
    We would like to mention that \cite{Fei} constructed similar birational maps between quiver moduli in more general setting using categorical techniques.
\end{rem}

\begin{proof}[Proof of Theorem \ref{main}]
    We collection what we have proved. By Lemma \ref{slo}, the collection $\{\cO_{P^n_m},\cO_{P^n_m}(H-E),\cO_{P^n_m}(H)\}$ is strong exceptional. Taking $\theta=(-p,p-q,q)$ for $p,q\in \Z_{>0}$, $p<q$ as in this section, we can apply Lemma \ref{fine} to show $M_\theta$ is a fine moduli space. Finally Theorem \ref{final} shows $T$ is an isomorphism.
\end{proof}

\section{Proof of Theorem \ref{second}}
Let $\theta'=(-p,0,p)$.
\begin{lem}\label{stable3}
    $R$ is $\theta'$-semistable if and only if 
    \begin{align*}
        r_{x_i}\neq 0
    \end{align*}
    for at least one $0\leq i\leq m$ OR
    \begin{align*}
        r_{x_i}r_e\neq 0
    \end{align*}
    for at least one $m+1\leq i\leq n$.
\end{lem}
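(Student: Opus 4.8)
The plan is to read off $\theta'$-semistability directly from the weight function together with Lemma \ref{subrep}, which tells us exactly which $0/1$-vectors occur as dimension vectors of subrepresentations. Since $\theta'(R')=\sum_{i\in\mathrm{supp}(R')}\theta'_i$ and $\theta'=(-p,0,p)$, I would first tabulate $\theta'$ on all six proper nonzero supports: $(0,1,0)\mapsto 0$, $(0,0,1)\mapsto p$, $(0,1,1)\mapsto p$, $(1,0,1)\mapsto 0$, $(1,0,0)\mapsto -p$, $(1,1,0)\mapsto -p$. Only the last two are negative, so $R$ fails to be $\theta'$-semistable precisely when at least one of the supports $(1,0,0)$ or $(1,1,0)$ is actually carried by a subrepresentation of $R$.

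Next I would apply Lemma \ref{subrep} to each of these two supports. The vector $(1,0,0)$ underlies a subrepresentation iff every arrow out of vertex $1$ vanishes, i.e.\ $r_{x_0}=\cdots=r_{x_m}=0$ and $r_e=0$; the vector $(1,1,0)$ underlies a subrepresentation iff every arrow into vertex $3$ vanishes, i.e.\ $r_{x_0}=\cdots=r_{x_n}=0$ (the arrow $e\colon 1\to 2$ imposes no condition here, both its endpoints lying in the support). Hence $R$ is $\theta'$-semistable if and only if it is not the case that $r_{x_0}=\cdots=r_{x_m}=0$ and $r_e=0$, and simultaneously not the case that $r_{x_0}=\cdots=r_{x_n}=0$.

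Finally I would simplify this pair of negated conditions. Writing $A$ for the statement ``$r_{x_i}\neq 0$ for some $0\leq i\leq m$,'' the condition becomes $(A\text{ or }r_e\neq 0)$ and $(A\text{ or }r_{x_i}\neq 0\text{ for some }m+1\leq i\leq n)$, which is equivalent to $A$, or else ($r_e\neq 0$ and $r_{x_i}\neq 0$ for some $m+1\leq i\leq n$). Because the dimension vector is $\mathbf{1}$, all the $r_a$ are scalars, so this last clause is exactly ``$r_{x_i}r_e\neq 0$ for some $m+1\leq i\leq n$,'' which yields the stated equivalence. The only points demanding any care are this Boolean bookkeeping and the observation that the correct notion here is semistability rather than stability, since the supports $(0,1,0)$ and $(1,0,1)$ have $\theta'$-weight $0$, so strictly semistable representations genuinely occur (consistent with $M_{\theta'}\cong\mathds{P}^n$ not being cut out as a fine moduli space by Lemma \ref{fine}).
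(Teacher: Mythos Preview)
Your proposal is correct and follows essentially the same approach as the paper: the paper's proof is the terse one-liner ``by Lemma~\ref{subrep}, $R$ is $\theta'$-unstable if and only if it has a subrepresentation with dimension vector $(1,0,0)$ or $(1,1,0)$, which in turn is equivalent to the conditions in the statement of the lemma,'' and you have simply unpacked this by tabulating $\theta'$ on all six supports, checking via Lemma~\ref{subrep} exactly when $(1,0,0)$ and $(1,1,0)$ occur, and carrying out the Boolean simplification explicitly. Your added remark about the supports $(0,1,0)$ and $(1,0,1)$ having weight $0$ (hence strictly semistable points exist) is also in line with the paper's subsequent remark.
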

\begin{proof}
    By Lemma \ref{subrep}, a representation $R$ of $Q$ is $\theta'$-unstalbe if and only if it has subrepresentation with dimension either $(1,0,0)$ or $(1,1,0)$, which in turn is equivalent to the conditions in the statement of the lemma.
\end{proof}
\begin{rem}
    We note $M_{\theta'}$ is a coarse moduli space with strictly semistable representations. For example $(x_1,\ldots,x_m,0,\ldots,0, e)$ with $x_1\neq 0$ has a subrepresentation with dimension vector $(0,1,0)$, which makes it not stable.
\end{rem}

\begin{proof}[Proof of Theorem \ref{second}]
    To show $M_{\theta_0}\cong \mathds{P}^n$, simply notice $B(\lambda \theta_0)$ is the $\mathbf{k}$ span of $$\{x_1^{a_1}\ldots x_m^{a_m}(x_{m+1}e)^{a_{m+1}}\ldots (x_ne)^{a_n}\}$$
    for $a_1+\ldots+a_n=p$. Thus
    \begin{align*}
        M_{\theta_0}\cong \Proj\bigoplus_{\lambda\geq0}B(\lambda \theta_0)
    \end{align*}
    corresponds to the $p$-upple embedding of $\mathds{P}^n$ having $x_1,\ldots,x_m,(x_{m+1}e),\ldots,(x_ne)$ as projective coordinates.\\
    To show $id:Rep(Q)\to Rep(Q)$ descents to a morphism, we need to check if $R$ is $\theta$-stable, then $R$ is $\theta_0$-semistable. If $R\subset U$, then $R$ is $\theta$-stable implies $r_{x_i}\neq 0$ for at least one $m+1\leq i\leq n$ by Lemma \ref{stable1}, which implies $r_er_{x_i}\neq 0$ for at least one $m+1\leq i\leq n$, thus $R$ is $\theta_0$-semistable by Lemma \ref{stable3}.
    
    If $r_e=0$, then $R$ is $\theta$-stable implies $r_{x_i}\neq 0$ for at least one $1\leq i\leq m$ by Lemma \ref{stable2}, which in turn implies $R$ is $\theta_0$-semistable by Lemma \ref{stable3}. Thus $id$ descends.
    
    To see the induces morphism $\pi':M_\theta\to M_{\theta_0}$ is the natural projection, we recall that $M_{\theta_0}\cong \mathds{P}^n$ has  $x_1,\ldots,x_m,(x_{m+1}e),\ldots,(x_ne)$ as projective coordinates. Then following the definition of $f$ and the proof of Corollary \ref{open}, we see $\pi'|_{M_\theta\backslash C}$ is an isomorphism.\\
    It remains to observe that for $[a_1,\ldots,a_m,b_{m+1}\ldots,b_n,0]\in C\cong\mathds{P}^m\times\mathds{P}^{n-m-1}$,
    \begin{align*}
        \pi'([a_1,\ldots,a_m,b_{m+1}\ldots,b_n,0])=[a_1,\ldots,a_m,0,\ldots,0]
    \end{align*}
    where the right hand side is written in the coordinates of $x_1,\ldots,x_m,(x_{m+1}e),\ldots,(x_ne)$. Thus $\pi'$ is the blow down morphism.
\end{proof}

\begin{rem}
    The reason for $id$ to descent to a contraction is the existence of strictly semistable representations in $M_{\theta_0}$. There are two kinds of these representations:
    \begin{itemize}
        \item $[a_1,\ldots,a_n,0]$ where at least one of $a_i\neq 0$ for $1\leq i\leq m$.
        \item $[b_1,\ldots,b_m,0,\ldots,0,e]$ where at least one of $b_i\neq 0$ for $1\leq i\leq m$.
    \end{itemize}
    Notice the intersection of these two types are representations $[c_1,\ldots,c_n,0,\ldots,0,0]$. Moreover, letting $(1,k,1)$ acts on the first kind and $(1,1/k,1)$ acts on the second, and $k\to \infty$ we see there is a representation of form $[c_1,\ldots,c_n,0,\ldots,0,0]$ lying in the orbit closure of any strictly semistable representation. \\
    In particular, for all representation in $[a_1,\ldots,a_m]\times\mathds{P}^{n-m-1}\subset C\subset M_\theta$, their orbit closures in $M_{\theta_0}$  contain $[a_1,\ldots,a_m,0,\ldots,0,0]$, thus they are represented by one single point in $M_{\theta_0}$.
\end{rem}

\section*{Acknowledgements}
The author would like to thank his advisor Valery Lunts for constant support and inspiring discussions. He would like to thank Alastair Craw, Shizhuo Zhang for many helpful discussions and suggestions.

\printbibliography[heading=bibintoc,title={Bibliography}]
\end{document}